\newif\ifshowcolor
\newcommand{\mycolor}[2][blue]{%
  \ifshowcolor
    \textcolor{#1}{#2}%
  \else
    #2%
  \fi
}
\newtheorem{theorem}{Theorem}
\newtheorem{problem}{Problem}
\newtheorem{proposition}{Proposition}
\newtheorem{definition}{Definition}
\newtheorem{lemma}{Lemma}
\DeclareMathOperator{\proj}{Proj}
\journal{European Journal of Control}
\begin{document}

\begin{frontmatter}


\title{A Passivity Analysis for Nonlinear Consensus on Balanced Digraphs\tnoteref{label1}}
\tnotetext[label1]{This work was supported by the Israel Science Foundation grant no.\,453/24 and the Gordon Center for Systems Engineering.}

\author{Feng-Yu Yue\corref{cor1}\fnref{label2}}
\ead{fengyu.yue@campus.technion.ac.il}
\author{Daniel Zelazo\fnref{label2}}
\ead{dzelazo@technion.ac.il}

\cortext[cor1]{Corresponding authors.}
\affiliation[label2]{organization={Faculty of Aerospace Engineering},
           addressline={Technion
– Israel Institute of Technology}, 
           city={Haifa},
           postcode={3200003}, 
           country={Israel}}

\begin{abstract}
This work deals with the output consensus problems for multi-agent systems over balanced digraphs. \mycolor{While passivity-based approaches are widely used for analyzing undirected consensus protocols, we show that they are generally not applicable to the directed linear consensus protocol.} To address this limitation, we propose a general approach that enables a passivity-based analysis for network systems with directed couplings. Then, we mitigate the complexity introduced by nonlinearities and directed interconnections by reformulating the general output consensus problem as a convergence analysis on a submanifold. \mycolor{Within this framework, we further focus on the stabilization problem, a specific form of the output consensus problem,} and establish a sufficient passivity-based condition for stabilizing multi-agent systems over balanced digraphs. The results are supported by a numerical example.
\end{abstract}



\begin{keyword}
Multi-agent systems \sep Passivity \sep Output consensus \sep Stabilization \sep Nonlinear systems



\end{keyword}

\end{frontmatter}



\section{Introduction}
Multi-agent systems (MASs) have received extensive attention in both industrial practice and theoretical research, ranging from smart grids, distributed sensing and transportation networks to control, robotics and computer science
\citep{bullo2020lectures, BearingOnly_TAC2015, AerialSwarm_TRO2018,ReinforcementLearning}. From a control perspective, a fundamental challenge in this field is the consensus problem \citep{scardovi2009sync_bionet}, which aims to coordinate agent dynamics to achieve agreement on a shared state or trajectory.

Consensus analysis requires understanding the fundamental interplay between agent dynamics, information exchange structures, and interaction protocols in MASs \citep{Burger2013AUTO_duality,Sharf2019MIMO}. Diffusively-coupled networks provide a canonical architecture for studying these relationships \citep{mesbahi2010graph}. Their inherent structure, composed of symmetric and feedback interconnections, makes passivity theory a natural tool for its analysis \citep{bai2011cooperative}. \mycolor{Passivity theory can simplify the study of complex network systems, as it enables a decoupled treatment of network dynamics and network topology, and is inherently linked to stability and convergence \citep{Burger2013AUTO_duality}.} Arcak's seminal work \citep{arcak2007passivitydesign} leveraged passivity to characterize network convergence behavior. This approach was later extended into a comprehensive passivity-based cooperative control framework for single-input single-output (SISO) systems \citep{Burger2013AUTO_duality}. 
This framework revealed a connection between the network system steady-states and dual network optimization problems \citep{Rockafellar1998NetOpt}. 

While the passivity framework has proved very powerful, it relies heavily on the symmetric feedback interconnection of the incidence matrix in diffusively coupled networks. This symmetry requirement confines the framework to systems with undirected interconnections. Replacing one of the incidence matrices in the structure (detailed in Section \ref{sec:preliminary}) enables the representation of directed graph topologies but sacrifices the diffusive coupling property due to the loss of symmetry. Moreover, given passive edge controllers, the feedback path in the loop may not preserve passivity for the entire interconnection. These challenges hinder passivity-based analysis for MASs over digraphs, motivating a generalized approach that extends passivity theory’s benefits to solving the consensus problem of directed topologies.

On the other hand, since \mycolor{passivity theory} enables a separate analysis of system dynamics and the underlying graphs, consensus problems can be categorized by the linearity of the system dynamics and the graph directionality. The problems can be classified, by increasing complexity, as linear dynamics over undirected graphs (e.g., the standard linear consensus protocol), nonlinear dynamics over undirected graphs (e.g., \citep{Burger2013AUTO_duality}), linear dynamics over digraphs (e.g., the linear consensus protocol for digraphs), and nonlinear dynamics over digraphs (e.g., \citep{li2019passivition,li2020jointlypassivation}). Also, consensus behaviors manifest in two distinct forms: \emph{average consensus}, where agent states converge to the mean of initial conditions, and \emph{regular consensus}, where states agree on the same value (not necessarily the average). When applying linear consensus protocols, systems over connected undirected graphs achieve average consensus, whereas systems over digraphs containing a globally reachable node only achieve regular consensus. However, systems with balanced digraphs restore average consensus capability, suggesting balanced digraphs' unique intermediate position between directed and undirected topologies. This motivates the investigation into balanced digraphs in this paper.

This paper focuses on a \mycolor{challenging} consensus problem within the above taxonomy: nonlinear dynamics over digraphs. The works \citep{li2019passivition,li2020jointlypassivation} were related to this topic and developed a passivation approach, but they only considered the case where the controllers are linear static maps and didn't provide a general analysis method for network systems with directed coupling. Montenbruck et al. \citep{Max2017submanifold} regarded the agreement space as a submanifold and developed powerful analytical tools to establish connections between passivity properties and stabilization around a submanifold, yielding explicit controller synthesis methods. However, they considered all controllers as a single entity without examining the passivity of each individual agent or controller, so it did not allow for an in-depth investigation of the interplay among the controller dynamics, the agent dynamics, and the underlying digraphs within the MASs.

In this paper, we conduct a passivity analysis for MASs interconnected via balanced digraphs and investigate the relationship between passivity, output consensus and \mycolor{stabilization (a specific form of output consensus) in such systems}. Our contributions are as follows. We begin by discussing the difference between the diffusively coupled network and its variant for digraphs. Our analysis uncovers a potential loss of passivity in the feedback path of the variant structure for general digraphs, even under the fundamental linear consensus protocol. Building on these insights, we develop a generalized approach that enables a passivity-based analysis for systems with directed couplings. \mycolor{By reformulating the output agreement problem as convergence to a submanifold, we derive passivity conditions for agents and controllers that stabilize network systems over balanced digraphs.}

The remainder of the paper is organized as follows. Section \ref{sec:preliminary} introduces preliminaries on \mycolor{networked systems and submanifold stabilization}.
Section \ref{sec:general_structure} proposes a general approach for analyzing directed coupling and reformulates the output agreement problem. Section \ref{sec:main_result} presents a passivity-based analysis for stabilizing networked systems interconnected via balanced digraphs. Numerical examples and concluding remarks are given in Sections \ref{sec:case} and  \ref{sec:conclusion}.

\paragraph*{Notations}
The notation $\mathbb{1}_n$ ($\mathbb{0}_n$) denotes the $n$-dimensional vector of all ones (zeros), and $I_n$ represents the $n\times n$ identity matrix, where the subscript $n$ may be omitted when the dimension is clear from the context. For a set $A$, its cardinality is denoted by $|A|$. We denote the kernel of a linear transformation $T:\ X\to Y$ by $\ker(T)$, and the orthogonal complement of a subspace $U$ by $U^\perp$. 

Fundamental notions from algebraic graph theory are also used in this paper. A directed graph $\mathcal{G}=(\mathbb{V}, \mathbb{E})$ comprises of a finite vertex set $\mathbb{V}$ and an edge set $\mathbb{E}\subset\mathbb{V}\times \mathbb{V}$. The incidence matrix $E\in \mathbb{R}^{|\mathbb{V}|\times|\mathbb{E}|}$ is defined as follows. $[E]_{ik}:=1$ if $i$ is the head of edge $e_k$, $[E]_{ik}:=-1$ if $i$ is the tail of edge $e_k$ and $[E]_{ik}:=0$ otherwise.
We decompose the incidence matrix into the out-incidence matrix $B_o$ and the in-incidence matrix $B_i$ \citep{restrepo2021edgeLyapunov}, i.e., $E=B_o+B_i$, where: $[B_o]_{ik}:=1$ if $i$ is the head of edge $e_k=(i,k)$ and $[B_o]_{ik}:=0$ otherwise; $[B_i]_{ik}:=-1$ if $i$ is the tail of edge $e_k$ and $[B_o]_{ik}:=0$ otherwise. The graph Laplacian of undirected graphs is defined as $L=EE^\top$. For digraphs, we define the in-Laplacian matrix $L_i=-B_iE^\top$ and out-Laplacian matrix $L_o=B_oE^\top$.

\section{Preliminaries}\label{sec:preliminary}
This section presents \mycolor{two key structures of network systems}, diffusively-coupled networks and their directed variants, along with essential passivity concepts. We then outline the mathematical framework for reformulating output consensus as a convergence analysis on a submanifold.
\subsection{Balanced digraphs and globally reachable nodes}\label{sec:balanced}
A digraph is called \emph{balanced} if the in-degree equals the out-degree for every node. 
\begin{lemma}\label{lemma:gmatrix}
    For incidence matrix $E\in \mathbb{R}^{|\mathbb{V}|\times |\mathbb{E}|}$ and out-Laplacian $L_o\in \mathbb{R}^{|\mathbb{V}|\times |\mathbb{V}|}$, the following statements are equivalent:
\begin{enumerate}
    \item[$i$)] The digraph is balanced,
    \item[$ii$)] $E^\top \mathbb{1}_{|\mathbb{V}|}=\mathbb{0}_{|\mathbb{E}|}$, and $E\mathbb{1}_{|\mathbb{E}|}=\mathbb{0}_{|\mathbb{V}|}$,
    \item[$iii$)] $L_o\mathbb{1}_{|\mathbb{V}|}=\mathbb{0}_{|\mathbb{V}|} \text{ and} \ L_o^\top\mathbb{1}_{|\mathbb{V}|}=\mathbb{0}_{|\mathbb{V}|} $.
\end{enumerate}
\end{lemma}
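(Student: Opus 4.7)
The plan is to peel off the parts of (ii) and (iii) that are tautologies given the incidence decomposition $E = B_o + B_i$, and reduce the equivalence to a single nontrivial condition on the row sums of $E$. First I would verify two identities that hold for any digraph: because each column of $E$ corresponds to one edge and contains exactly one $+1$ (at its head) and one $-1$ (at its tail), one has $E^\top \mathbb{1}_{|\mathbb{V}|} = \mathbb{0}_{|\mathbb{E}|}$; consequently $L_o \mathbb{1}_{|\mathbb{V}|} = B_o E^\top \mathbb{1}_{|\mathbb{V}|} = \mathbb{0}_{|\mathbb{V}|}$. This already disposes of the ``easy'' half of (ii) and the first half of (iii), independently of balance.

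Next I would reduce balance to a single row-sum condition on $E$. Row $i$ of $E$ contains a $+1$ for every edge whose head is $i$ (contributing to the in-degree at node $i$) and a $-1$ for every edge whose tail is $i$ (contributing to the out-degree), so $(E\mathbb{1}_{|\mathbb{E}|})_i$ equals $d_i^{\mathrm{in}} - d_i^{\mathrm{out}}$. This yields the equivalence $(i) \Leftrightarrow E\mathbb{1}_{|\mathbb{E}|} = \mathbb{0}_{|\mathbb{V}|}$ and, combined with the previous step, the full equivalence $(i) \Leftrightarrow (ii)$.

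Finally I would connect the balance condition to the remaining part of (iii). Since each column of $B_o$ has exactly one $+1$, located at the unique head of the corresponding edge, $B_o^\top \mathbb{1}_{|\mathbb{V}|} = \mathbb{1}_{|\mathbb{E}|}$, and therefore $L_o^\top \mathbb{1}_{|\mathbb{V}|} = E B_o^\top \mathbb{1}_{|\mathbb{V}|} = E \mathbb{1}_{|\mathbb{E}|}$. This identity makes the nontrivial half of (iii) equivalent to the nontrivial half of (ii), closing the chain $(i) \Leftrightarrow (ii) \Leftrightarrow (iii)$. I do not anticipate any serious obstacle: the argument is essentially bookkeeping on the incidence decomposition, and the only subtlety is recognizing that the substantive content of the lemma is the single identity $(E\mathbb{1})_i = d_i^{\mathrm{in}} - d_i^{\mathrm{out}}$, while the remaining clauses of (ii) and (iii) are automatic.
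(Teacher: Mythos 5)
Your proof is correct, and it is actually more self-contained than the paper's. For $(i)\Leftrightarrow(ii)$ you follow essentially the paper's route: both arguments note that $E^\top\mathbb{1}_{|\mathbb{V}|}=\mathbb{0}_{|\mathbb{E}|}$ holds for \emph{every} digraph (each column of $E$ carries exactly one $+1$ and one $-1$), so the substantive content of $(ii)$ is the single row-sum condition $E\mathbb{1}_{|\mathbb{E}|}=\mathbb{0}_{|\mathbb{V}|}$, which encodes balance node by node. Where you genuinely diverge is $(i)\Leftrightarrow(iii)$: the paper outsources this to Lemma 6.4 of Bullo's lecture notes, whereas you dispose of it with two one-line identities, $L_o\mathbb{1}=B_oE^\top\mathbb{1}=\mathbb{0}$ (so the first clause of $(iii)$ is automatic for any digraph) and $L_o^\top\mathbb{1}=EB_o^\top\mathbb{1}=E\mathbb{1}$ (since each column of $B_o$ contains exactly one $+1$), which makes the second clause of $(iii)$ literally the same condition as the nontrivial clause of $(ii)$. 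This buys a fully elementary, citation-free proof and also exposes structure the paper leaves implicit: each of $(ii)$ and $(iii)$ contains one tautological clause, and the whole lemma collapses to the single identity relating $E\mathbb{1}$ to the degree imbalance. One pedantic caveat: under the paper's (nonstandard) conventions, the head of $e_k=(i,k)$ is the node $i$ the edge \emph{leaves}, and $B_o$ collects the heads, so the $+1$ in row $i$ of $E$ counts toward the out-degree and $(E\mathbb{1})_i=d_i^{\mathrm{out}}-d_i^{\mathrm{in}}$, the negative of your $d_i^{\mathrm{in}}-d_i^{\mathrm{out}}$; since only vanishing of this quantity matters, the sign flip is immaterial and nothing in your argument breaks.
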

\begin{proof}
    $i$) $\Leftrightarrow$ $iii$): We recommend readers to refer to Lemma 6.4 in \citep{bullo2020lectures}.

    $i$) $\Leftrightarrow$ $ii$):
    It is sufficient to show the equivalence between statement $i$) and $E\mathbb{1}_{|\mathbb{E}|}=\mathbb{0}_{|\mathbb{V}|}$. The incidence matrix can be represented as $E=B_o+B_i$. For row $i$ ($i=1,\ldots,|\mathbb{V}|$) of $B_i$($B_o$), the row-sum of row $i$ is the in(out)-degree of the corresponding node $i$. Thus, the given digraph is balanced if and only if $E\mathbb{1}_{|\mathbb{E}|}=B_o{1}_{|\mathbb{E}|}+B_i{1}_{|\mathbb{E}|}=\mathbb{0}_{|\mathbb{V}|}$.
\end{proof}

In this work, we will use the notion of globally reachable nodes and walks. \mycolor{A \emph{walk} in a directed graph is a sequence of nodes connected by directed edges that point from one node to the next in the sequence.}
\begin{definition}[\mycolor{\citep{bullo2020lectures}}]
    A directed graph possesses a \emph{globally reachable node} if one of its nodes can be reached from any other node by traversing a directed walk.
\end{definition}

\subsection{Network systems and passivity}
Consider a population of agents interacting over a network $\mathcal{G}=(\mathbb{V},\mathbb{E})$, where the vertices $\mathbb{V}$ denote the set of agents and the edges $\mathbb{E}$ represent edge controllers describing the interaction between agents. Each agent $\{\Sigma_i\}_{i\in\mathbb V}$  and controller $\{\Pi_k\}_{k\in\mathbb E}$ are described by the SISO nonlinear dynamical systems,
\begin{equation}\label{eq:agents_decomp}
        \Sigma_i:\begin{cases}
            \dot{x}_i(t)=f_i(x_i(t),u_i(t)), \\
        y_i(t)=h_i(x_i(t),u_i(t)),
        \end{cases}
\end{equation}
\begin{equation}\label{eq:controllers_decomp}
    \Pi_k: \begin{cases}
        \dot{\eta}_k(t)=\phi_k(\eta_k(t),\zeta_k(t))\\
        \mu_k(t)=\psi_k(\eta_k(t),\zeta_k(t))
    \end{cases}
\end{equation}
\mycolor{Note that agents and controllers are interconnected in parallel, respectively.} Define the stacked inputs of agents $ {u}(t)=[u_1, \cdots, u_{|\mathbb{V}|}]^\top$, and similarly for outputs of agents $ {y}(t)$, inputs of controllers $ {\zeta}(t)$ and outputs of controllers $ {\mu}(t)$. 

\begin{figure*}[t]
    \centering
    \tikzstyle{block} = [draw, rectangle, 
    minimum height=2.5em, minimum width=4em]
\tikzstyle{sum} = [draw, circle, node distance=1cm]
\tikzstyle{input} = [coordinate]
\tikzstyle{output} = [coordinate]
\tikzstyle{pinstyle} = [pin edge={to-,thin,black}]
\begin{subfigure}{0.45\textwidth}
\resizebox{0.9\textwidth}{!}{
\begin{tikzpicture}[auto, node distance=2.5cm,>=latex', scale=.5]
    \node [input, name=input] {};
    \node [sum, right of=input, node distance=0.8cm] (sum) {};
    \node [block, below of=sum,
            node distance=1.5cm] (Eout) {
                $N$};
    \node [block, right of=sum] (agents) {$\Sigma$};
    \node [input, name=center, below of=agents] {};
    \node [block, below of=agents,
            node distance=3cm] (controllers) {$\Pi$};
    \node [output, right of=agents] (output) {};
    \node [block, below of=output,
            node distance=1.5cm] (ETout) {$M$};

    \draw [draw,->] (input) -- node {$p(t)$} (sum);
    \draw [->] (sum) -- node {$u(t)$} (agents);
    \draw [->] (agents) -| node [near start] {$y(t)$}  (ETout);
    \draw [->] (ETout) |- node [near end] {$\zeta(t)$}  (controllers);
    \draw [->] (controllers) -| node [near start] {$\mu(t)$}  (Eout);
    \draw [->] (Eout) -- node {$g(t)$} node[pos=0.95] {$-$}  (sum);
\end{tikzpicture} }   
    \caption[]{}
    \label{fig:DC_directed}
    \end{subfigure}
    \begin{subfigure}{0.45\textwidth}
    \centering
\resizebox{\textwidth}{!}{
\begin{tikzpicture}[auto, >=latex', scale=0.45]
    \usetikzlibrary{fit,backgrounds}
    
    \node [input, name=winput] {};
    \node [sum, right of=winput,node distance=1.5cm] (sum) {};
    \node [block, below of=sum, node distance=1.25cm] (E) {$E$};
    \node [block, right of=sum, node distance=2cm] (Sigma) {$\Sigma$};
    \node [output, right of=Sigma, node distance=2cm] (output) {};
    \node [output, right of=output, node distance=2cm] (output1) {};
    \node [output, right of=output1, node distance=0.8cm] (output2) {};
    
    \node [block, below of=Sigma, node distance=2.5cm] (Pi1) {$\Pi$};
    \node [block, below of=output, node distance=1.25cm] (ET1) {$E^\top$};
    
    \node [block, below of=output1, node distance=2cm] (ET2) {$E^\top$};
    \node [block, left of=ET2, node distance=8cm] (Bi) {$B_i$};
    \node [block, below of=Pi1, node distance=1.5cm] (Pi2) {$\Pi$};
    
    \begin{scope}[on background layer]
        \node[fill=gray!10, rounded corners, draw=gray!20, inner sep=8pt, fit=(sum) (E) (Sigma) (ET1) (Pi1)] {};
    \end{scope}
    
    \draw [->] (sum) -- node {$u(t)$} (Sigma);
    \draw [->] (output) -- (output2);
    \draw [->] (Sigma) -| node [near start] {$y(t)$} (ET1);
    \draw [->] (output1) -- (ET2);
    \draw [->] (ET1) |- node [near end] {$\zeta(t)$}  (Pi1);
    \draw [->] (Pi1) -| node [near start] {$\mu(t)$}  (E);
    \draw [->] (E) -- node {$z(t)$} node[pos=0.95] {$-$}  (sum);
    \draw [->] (ET2) |- node [near end] {$\zeta(t)$}  (Pi2);
    \draw [->] (Pi2) -| node [near start] {$\mu(t)$}  (Bi);
    \draw [->] (Bi) |- node [pos=0.6] {$w(t)$} node[pos=0.95] {$+$} (sum);
\end{tikzpicture}}
    \caption[]{}
    \label{fig:decomp}
\end{subfigure}
\caption{(a) Block-diagram of networked systems where matrices $M$ and $N$ represent the inter-agent interconnection. For $(\Sigma, \Pi, \mathcal G)_{E}$ we set $M=N^\top$ and the matrix $N=E$; for $(\Sigma, \Pi, \mathcal G)_{B_o}$ we set $N=B_o$ and $M=E^\top$. 
(b) A loop decomposition for the system $(\Sigma, \Pi, \mathcal G)_{B_o}$. We denote this structure by $(\Sigma,\Pi,\mathcal{G},w)$.}
\end{figure*}
\mycolor{As shown in Figure \ref{fig:DC_directed}, a network system can be represented by agent dynamics $\Sigma$ and controller dynamics $\Pi$, and two matrices $M$ and $N$ encoding the interconnection of the system. The feedback equations are then completed with the algebraic relations, $\zeta(t) = My(t)$ and $u(t) = p(t)-N\mu(t)$, where $p(t)$ denotes the exogenous input. In this paper, we only consider the case where $p(t)=0$.}

\mycolor{The matrices $M$ and $N$ can be selected to represent different information exchange topologies.} A network system described by Figure \ref{fig:DC_directed} is called \emph{diffusively coupled} \mycolor{if $M=N^\top$ and the matrix $N$ is set to the incidence matrix $E$}. In this configuration, the topology of the system is characterized by the undirected counterpart of $\mathcal{G}$, where the edges between the agents represent bidirectional communication links.

\mycolor{When matrices $N$ and $M$ are set to the out-incidence matrix $B_o$ and the transpose of the incidence matrix $E^\top$, respectively}, the resulting structure represents the networked system interconnected by digraphs. In this case, the system can no longer be considered diffusively coupled, as the inherent symmetry of the underlying graph is broken. We adopt the notation $(\Sigma,\Pi,\mathcal{G})_E$ for diffusively-coupled networks and denote the directed case by $(\Sigma,\Pi,\mathcal{G})_{B_{o}}$.

Now, let's take the system (\ref{eq:agents_decomp}) as an example to introduce the definition of passivity \citep{Khalil2008Nonlinear}.
\begin{definition}
    For the SISO system \eqref{eq:agents_decomp}, if there exists a positive semi-definite storage function $V_i(x_i)$ and scalers $\varepsilon_i$ and $\delta_i$ such that,
    \begin{equation}
        \dot{V_i}(x_i)\leq u_iy_i-\varepsilon_iy_i^2-\delta_i u_i^2, \quad \forall x_i,u_i,y_i,
    \end{equation}
    then, the system \eqref{eq:agents_decomp} is said to be
    \begin{enumerate}
        \item \emph{passive} if $\varepsilon_i=0$ and $\delta_i=0$,
        \item \emph{output strictly passive} (OP-$\varepsilon_i$)  if $\varepsilon_i>0$ and $\delta_i\geq0$,
        \item \emph{input strictly passive} (IP-$\delta_i$) if $\varepsilon_i\geq0$ and $\delta_i>0$.
    \end{enumerate}
\end{definition}%
\noindent The maximal $\delta_i$ and $\varepsilon_i$ are the \emph{passivity indices} of the system. 

Given passive edge controllers, the feedback path of $(\Sigma,\Pi,\mathcal{G})_E$ preserves the passivity while that of $(\Sigma,\Pi,\mathcal{G})_{B_o}$ may lose passivity. We will discuss the differences between the two structures from a passivity perspective in Section \ref{sec:general_structure}.

\subsection{Tools for analyzing convergence to a submanifold}
This paper performs a passivity-based analysis for the output consensus problems of network systems governed by balanced digraphs. Let $S$, $S^\perp$, and the vector $y(t)$ denote the \emph{agreement space} ${\rm span}(\mathbb{1})$, \emph{disagreement space}, and the output of a system at time $t$, respectively. The system is said to achieve \emph{asymptotic output agreement} if the output satisfies,
    $$\lim_{t\to\infty}y(t)=c\mathbb{1}\in S,$$
where $c\in\mathbb{R}$ is called the agreement value. On the other hand, $S$ is a submanifold, so the above definition implies that $y(t)$ \emph{asymptotically converges to the agreement submanifold $S$}. This allows for converting the output agreement problem into a study of the convergence on the agreement submanifold.

\mycolor{In this paper, we will consider signals in $\mathscr{L}^2$ space, as defined by,
\begin{equation}\label{eq:space_Lp}
    \mathscr{L}^2=\left\{f:\mathbb{R}\to \mathbb{R}^n \ | \ f \ \text{ measurable}, \left(\|f\|_{\mathscr{L}^2}\right)^{2}<\infty\right\}, 
\end{equation}
where 
$$\|f\|_{\mathscr{L}^2}=\left(\int_\mathbb{R} \|f(t)\|^2 {\rm d}t\right)^{1/2}$$
denotes the $\mathscr{L}^2$ norm of signal $f$, "measurable" means Lebesgue measurable, and ${\rm d}t$ is short for ${\rm d}\lambda(t)$, with $\lambda$ being the Legesgue measure on $\mathbb{R}$\citep{Max2017submanifold}. It is important to highlight that, with the inner product
\begin{equation}
    \langle\cdot,\cdot\rangle: \mathscr{L}^2\times\mathscr{L}^2\to\mathbb{R}, \langle x,y\rangle\mapsto\int_{-\infty}^\infty x^\top(t)y(t){\rm d}t,
\end{equation}
$\left(\mathscr{L}^2,\langle\cdot,\cdot\rangle\right)$ is a Hilbert space, where we can define passivity.}

\mycolor{As noted in \citep[Chapter 5]{Khalil2008Nonlinear}, mappings from $\mathscr{L}^2$ to $\mathscr{L}^2$ are insufficient for describing unstable systems. To overcome this limitation, we introduce its extended space, $\bar{\mathscr{L}}^2$:
\begin{equation}
    \bar{\mathscr{L}}^p= \left\{f\ | \ f^\tau\in\mathscr{L}^p, \  \forall \tau\in[0,\infty]\right\},
\end{equation}
where $f^\tau(t)=f(t)$ if $0\leq t\leq \tau$ and $f^\tau(t)=0$ if $t>\tau$.}

On the other hand, to analyze the convergence to the agreement submanifold, it is necessary to define a new space with respect to it. Montenbrunk et al. \citep{Max2017submanifold} introduced a suitable space $\mathscr{L}_M^p$ and its extended space $\bar{\mathscr{L}}_M^p$, which can be effectively employed in our analysis. \mycolor{For the output agreement problem, set $M=S$ and $p=2$.} Since \mycolor{$S$} is a smoothly embedded submanifold, it has a tubular neighborhood $U$ by the \emph{tubular neighborhood theorem} \citep[Chapter 10]{lee2024smoothmanifolds}.
Then, we can define the space,
{\small 
    $$\mycolor{\mathscr{L}_S^2}=\left\{f:\mathbb{R}\to U|f \, \text{measurable}, \int_\mathbb{R}d(f(t),S)^2 {\rm d}t<\infty\right\},$$
}%
where \mycolor{$d(f(t),S)$ denotes the infimal Euclidean distance from all the points in $S$ to $f(t)$.} \mycolor{Similarly, define the truncation and extended space $\bar{\mathscr{L}}_S^2$. The truncation should map any signal to the desired submanifold $M$, so it can be chosen as the orthogonal projection onto $S$,} $$r:\mathbb{R}^n\to S, \ x\mapsto\proj_S(x)=\tfrac{1}{n}\mathbb{1}_n\mathbb{1}_n^\top(x).$$ 
Then, the extended space $\bar{\mathscr{L}}_S^2$ is defined by
\begin{equation}
    \bar{\mathscr{L}}_S^2= \left\{f|f_S^\tau\in\mathscr{L}_S^2, \  \forall \tau\in[0,\infty]\right\},
\end{equation}
where
\begin{equation}\label{eq:f_S_tau}
    f_S^\tau(t)=\begin{cases}
        f(t), \ & \text{if} \ 0\leq t\leq \tau ,\\
        r(f(t)), \ & \text{otherwise.}
    \end{cases}
\end{equation}

\mycolor{When applying the extended space $\bar{\mathscr{L}}_S^2$, we encounter a limitation. While we can define a mapping $\|\cdot\|_{\mathscr{L}_S^2}: \mathscr{L}_S^2\to \mathbb{R}, \ f\mapsto \int_\mathbb{R}d(f(t),S)^2{\rm d}t$, this fails to satisfy the triangle inequality and thus isn't a norm.} To address this, we introduce a mapping $\Theta_S$,
\begin{equation}\label{eq:ThetaS}
     \Theta_S: \ \bar{\mathscr{L}}_S^2\to \bar{\mathscr{L}}^2, \quad f(t)\mapsto \proj_{S^\perp}(f(t)),
\end{equation}
where $\proj_{S^\perp}(f(t))=\left(I_n-\tfrac{1}{n}\mathbb{1}_n\mathbb{1}_n^\top\right)(f(t))$ denotes the projection of $f(t)$ onto the disagreement submanifold $S^\perp$. \mycolor{This allows us to represent $\bar{\mathscr{L}}_S^2$ in $\bar{\mathscr{L}}^2$ and enables us to treat $\left(\mathscr{L}_S^2, \|\cdot\|_{\mathscr{L}_S^2}\right)$ as if it were a Hilbert space, permitting the definition of passivity.}

\mycolor{For the output agreement problem, the output is expected to converge to the agreement submanifold, while the input can be any signal in $\bar{\mathscr{L}}^2$. With this understanding, we can study the passivity relations \citep[equation (51)]{Max2017submanifold}, i.e., the relations $H\subset \left(\bar{\mathscr{L}}^2, \bar{\mathscr{L}}_S^2\right)$ defined as
%
\begin{equation}\label{eq:passivity_relations}
\begin{aligned}
H = \Bigg\{ \, (u(t), y(t)) \,\Big|\, 
    & \, u(t) \in \bar{\mathscr{L}}^2,\ y(t) \in \bar{\mathscr{L}}_S^2,\ \forall \tau \in [0,\infty), \\
    &\hspace{-2.5cm} \left\langle u^\tau(t), \Theta_S\big(y_S^\tau(t)\big)\right\rangle 
    \,\geq\, l\,\|u^\tau(t)\|_{\mathscr{L}^2}^2 
    \,+\, e\,\big\|\Theta_S\big(y_S^\tau(t)\big)\big\|_{\mathscr{L}^2}^2 
\Bigg\}.
\end{aligned}
\end{equation}
where $u^\tau(t)$ and $y_S^\tau(t)$ are as defined in \eqref{eq:f_S_tau}. 
The relation \eqref{eq:passivity_relations} is said to be passive when $l=e=0$. In general, $l$ and $e$ can be any real numbers. Then, applying \eqref{eq:ThetaS} and the properties of $\Theta_M$ (see \citep[Lemma 1]{Max2017submanifold}), we can write down the point-wise form of the passivity inequality,
\begin{equation}
    u(t)^\top \proj_{S^\perp}(y(t))\geq l\|u(t)\|^2+e\|\proj_{S^\perp}(y(t))\|^2,
\end{equation}
where $\proj_{S^\perp}(y(t))$ appears explicitly. This connects the passivity theory to the output consensus problem.}

Given these notions, demonstrating convergence to the submanifold $S$ is equivalent to proving that $\proj_{S^\perp}(y(t))$ approaches zero as $t\to \infty$. This equivalence arises from the geometric interpretation of the projection operator: as $\proj_{S^\perp}(y(t))$ tends to zero, the distance between the signal $y(t)$ and its projection onto the submanifold $S$ diminishes, implying convergence to $S$. The following definition connects output consensus and convergence to a submanifold.
\begin{definition}
    Consider a network system consisting of a group of agents and edge controllers interconnected as in Figure \ref{fig:DC_directed}. Let $y(t)$ be the output of the system. We say that output $y(t)$ \emph{asymptotically converges to the agreement submanifold $S$}, if
        $\lim_{t\to\infty}\proj_{S^\perp}(y(t))= 0.$
\end{definition}
For conciseness, in the following discussion, we adopt the notation $\proj_{S^\perp}(y)$ in place of $\proj_{S^\perp}(y(t))$.

\section{A passivity analysis for directed coupling}\label{sec:general_structure}
This section begins with a passivity-based analysis of $(\Sigma,\Pi,\mathcal{G})_E$ and $(\Sigma,\Pi,\mathcal{G})_{B_o}$ under the linear consensus protocol, revealing a potential loss of passivity in the feedback path of $(\Sigma,\Pi,\mathcal{G})_{B_o}$. To address this issue, we propose a general approach for analyzing directed information exchange topologies.

\subsection{Passivity analysis for the linear consensus protocol}
The system $(\Sigma,\Pi, \mathcal{G})_{B_o}$ in Figure \ref{fig:DC_directed} might be the most straightforward candidate to analyze directed coupling. However, when applying the basic linear consensus protocol for digraphs to this structure, the passivity of the feedback path (from $y$ to $g$) cannot be guaranteed even though the edge controllers are output-strictly passive. We focus our analysis on $(\Sigma,\Pi, \mathcal{G})_{B_o}$, as the approach and results for the alternative case are analogous.

Consider the linear consensus protocol for digraphs. Here, we take the agent dynamics $\Sigma^l$ to be the integrators, and the controller dynamics $\Pi^l$ to be the linear static map,
\begin{equation} \label{eq:integrators}
   \Sigma^l: \begin{cases}
       \dot{x}(t) = u(t),\\ y(t) = x(t),
   \end{cases} \text{and} \ \ \Pi^l:  \mu(t) = \zeta(t).
\end{equation}
Note that the integrator dynamics are passive \citep{Khalil2008Nonlinear}, and edge controllers are output strictly passive.
The closed-loop dynamics then yield
    $\dot{ x}(t)=-L_o(\mathcal{G}){x}(t),$
and the generated trajectories converge to the agreement space, $S={\rm span}(\mathbb{1})$ if and only if the underlying digraph contains \mycolor{a globally reachable node}. 


To leverage the benefits of passivity theory for analyzing the diffusively-coupled structure, both the forward and feedback paths should be passive \citep{Khalil2008Nonlinear}. Consider the system $(\Sigma^l,\Pi^l,\mathcal{G})_{B_o}$, where $\Sigma^l$ and $\Pi^l$ are known to be passive. Our objective is to investigate whether the feedback path (from $y$ to $g$) in Figure \ref{fig:DC_directed} is passive. The controllers in this protocol are memoryless functions. Consequently, with input $y$ and output $g$, the feedback path is passive if $y^\top g\geq0$ for all $y$ and $g$ \citep{Khalil2008Nonlinear}. Using the relation $u=-B_o\mu$ and $\mu=\zeta$, it is equivalent to the spectral analysis of the symmetric part of $L_o$, denoted by $\ \ \tfrac {(L_o+L_o^\top)}{2}=\mathrm{sym}(L_o)$ \citep{horn2012matrix}. Indeed, if $y^\top g=y^\top(- B_o)E^\top y = y^\top L_oy = y^\top\tfrac{L_o+L_o^\top}{2}y=y^\top\mathrm{sym}(L_o)y\geq 0$ for all $y\in \mathbb{R}^n$, the feedback path is passive. 

Our first result shows that for digraphs with globally reachable nodes, the smallest eigenvalue of $\mathrm{sym}(L_o)$ is non-positive.
\begin{proposition}
    If $\mathcal{G}$ contains a globally reachable node, then the smallest eigenvalue of the symmetric part $\mathrm{sym}(L_o)$ is non-positive. 
\end{proposition}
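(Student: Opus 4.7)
The plan is a short Rayleigh--Ritz argument evaluated at the all-ones vector $\mathbb{1}$. The only structural input needed is the identity $L_o \mathbb{1}_{|\mathbb{V}|} = \mathbb{0}$, which I would argue holds for \emph{every} digraph (balancedness is not required here). Indeed, from the decomposition $L_o = B_o E^\top$ together with the column-sum identity $E^\top \mathbb{1}_{|\mathbb{V}|} = \mathbb{0}_{|\mathbb{E}|}$ --- each column of $E$ carries exactly one $+1$ (the head) and one $-1$ (the tail), as recorded in Lemma \ref{lemma:gmatrix}$(ii)$ --- one obtains $L_o \mathbb{1} = B_o(E^\top \mathbb{1}) = \mathbb{0}$ directly.

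With this in hand, I would compute
\begin{equation*}
\mathbb{1}^\top \mathrm{sym}(L_o)\,\mathbb{1} \;=\; \mathbb{1}^\top L_o\,\mathbb{1} \;=\; 0,
\end{equation*}
using the identity $\mathbb{1}^\top A\,\mathbb{1} = \mathbb{1}^\top \mathrm{sym}(A)\,\mathbb{1}$ valid for any square $A$. Since $\mathrm{sym}(L_o)$ is symmetric, the Rayleigh--Ritz variational characterization of its smallest eigenvalue immediately yields
\begin{equation*}
\lambda_{\min}\bigl(\mathrm{sym}(L_o)\bigr) \;\leq\; \frac{\mathbb{1}^\top \mathrm{sym}(L_o)\,\mathbb{1}}{\mathbb{1}^\top \mathbb{1}} \;=\; 0,
\end{equation*}
which is precisely the claim.

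There is essentially no real obstacle: once $L_o \mathbb{1} = 0$ is noticed, the proposition collapses to a one-liner. Notably, the globally-reachable-node hypothesis is never actually used --- the bound $\lambda_{\min}(\mathrm{sym}(L_o)) \leq 0$ in fact holds for \emph{every} digraph. That hypothesis appears to play only a contextual role, tying the proposition to the convergence discussion of the linear consensus protocol $\dot x = -L_o x$ immediately preceding it. As an aside, one could further note that equality $\lambda_{\min}=0$ forces $\mathbb{1}$ to be an eigenvector, which via $\mathrm{sym}(L_o)\mathbb{1} = \tfrac12 L_o^\top \mathbb{1}$ is equivalent to $L_o^\top \mathbb{1} = 0$, i.e.\ to balancedness by Lemma \ref{lemma:gmatrix}$(iii)$ --- hence for unbalanced digraphs the inequality is automatically strict, although the proposition as stated does not claim this refinement.
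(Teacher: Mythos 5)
Your proof is correct, but it takes a genuinely different route from the paper's. The paper proves the claim via the Fan--Hoffman inequality $\lambda_j(\mathrm{sym}(L_o))\leq s_j(L_o)$ between the eigenvalues of the symmetric part and the singular values of $L_o$, using the globally-reachable-node hypothesis to conclude $\mathrm{rank}(L_o)=n-1$, hence $s_n=0$ and $\lambda_{\min}(\mathrm{sym}(L_o))\leq s_n=0$. You instead observe that $L_o\mathbb{1}=B_o(E^\top\mathbb{1})=\mathbb{0}$ holds for \emph{every} digraph, since each column of $E$ contains exactly one $+1$ and one $-1$, and then evaluate the Rayleigh quotient of $\mathrm{sym}(L_o)$ at $\mathbb{1}$. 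Both steps are sound; one small bookkeeping remark is that your pointer to Lemma \ref{lemma:gmatrix}$(ii)$ is slightly loose, since that statement bundles the universal identity $E^\top\mathbb{1}=\mathbb{0}$ with the balancedness condition $E\mathbb{1}=\mathbb{0}$ as a single equivalence --- but your self-contained column-sum argument is what actually carries the step, so nothing is at stake. Your route is more elementary (no singular-value machinery) and strictly more general: as you note, the reachability hypothesis is never used, and in fact the paper's own proof only exploits $s_n=0$, i.e.\ singularity of $L_o$, which your identity supplies for free, so the hypothesis is contextual in both arguments. What each approach buys: the paper's argument yields the full chain $\lambda_j\leq s_j$ for all $j$, which is more information than the proposition needs, while your argument yields the sharp refinement in your aside --- since attaining $\lambda_{\min}=0$ at $\mathbb{1}$ forces $\mathrm{sym}(L_o)\mathbb{1}=\tfrac{1}{2}L_o^\top\mathbb{1}=\mathbb{0}$, the bound is automatically strict for unbalanced digraphs --- which dovetails nicely with the paper's subsequent Propositions 2 and 3 relating the zero eigenvalue of $\mathrm{sym}(L_o)$ to balancedness.
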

\begin{proof}
    Let $s_1, \ldots, s_n$ be the singular values of $L_o$ and $\lambda_1,\ldots,\lambda_n$ be the eigenvalues of $\mathrm{sym}(L_o)$, both arranged in nonincreasing order. 
    The digraph $\mathcal{G}$ containing a globally reachable node implies that the rank of $L_o$ is $n-1$ \citep{bullo2020lectures}. It follows that $s_{n-1}>s_n=0$. To establish the relationship between $s_j$ and $\lambda_j$, we apply the Fan-Hoffman \citep[Proposition III.5.1]{bhatia1997matrix}. This proposition implies that $\lambda_j\leq s_j$ for all $j\in[1,n]$. By setting $j=n$, we can deduce that the smallest eigenvalue of $\mathrm{sym}(L_o)$ is non-positive.
\end{proof}

The following proposition provides a sufficient and necessary condition for $\mathrm{sym}(L_o)$ having a zero eigenvalue.
\begin{proposition}
    Let $\mathcal{G}$ contains a globally reachable node. Then $L_o$ and $L_o^\top$ have the same kernel space if and only if $\mathrm{sym}(L_o)$ has a zero eigenvalue.
\end{proposition}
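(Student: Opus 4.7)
The forward direction is immediate. Since $L_o$ has zero row sums, $\mathbb{1}\in\ker(L_o)$; if $\ker(L_o)=\ker(L_o^\top)$, then $\mathbb{1}\in\ker(L_o^\top)$ as well, so $L_o^\top\mathbb{1}=0$ and $\mathrm{sym}(L_o)\mathbb{1}=\tfrac12(L_o+L_o^\top)\mathbb{1}=0$, exhibiting $0$ as an eigenvalue.

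For the converse, the globally reachable hypothesis gives, as in the proof of Proposition~1, $\mathrm{rank}(L_o)=n-1$, so both $\ker(L_o)=\mathrm{span}(\mathbb{1})$ and $\ker(L_o^\top)$ are one-dimensional. Equality of these kernels is therefore equivalent to $L_o^\top\mathbb{1}=0$, equivalently (by Lemma~\ref{lemma:gmatrix}) to $\mathcal{G}$ being balanced. An identity I will exploit is $\mathbb{1}^\top\mathrm{sym}(L_o)\mathbb{1}=\tfrac12(\mathbb{1}^\top L_o\mathbb{1}+\mathbb{1}^\top L_o^\top\mathbb{1})=0$, which holds unconditionally since $L_o\mathbb{1}=0$.

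Assume now that $\mathrm{sym}(L_o)$ has $0$ as an eigenvalue. The plan is to combine this with the Fan-Hoffman bound $\lambda_n(\mathrm{sym}(L_o))\le\sigma_n(L_o)=0$ from Proposition~1 to upgrade it to the equality $\lambda_n=0$, hence to conclude that $\mathrm{sym}(L_o)$ is positive semidefinite. Once PSD, the identity above together with the standard fact that $v^\top Mv=0$ implies $Mv=0$ for symmetric PSD $M$ immediately yields $\mathrm{sym}(L_o)\mathbb{1}=0$, and thus $L_o^\top\mathbb{1}=-L_o\mathbb{1}=0$, completing the proof. The hard step is the PSD upgrade: I must rule out the indefinite case in which $\lambda_n<0$ while $0$ nonetheless lies in the spectrum, which I would attack by combining the rank-$(n-1)$ hypothesis with the structural identity $\mathrm{sym}(L_o)=\tfrac12(EE^\top+B_oB_o^\top-B_iB_i^\top)$ that follows from $L_o=B_oE^\top$ and $E=B_o+B_i$.
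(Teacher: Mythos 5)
Your forward direction is correct and coincides with the paper's sufficiency argument: $L_o\mathbb{1}=\mathbb{0}$ holds unconditionally, so kernel equality puts $\mathbb{1}\in\ker(L_o^\top)$ and hence $\mathrm{sym}(L_o)\mathbb{1}=\tfrac12(L_o+L_o^\top)\mathbb{1}=\mathbb{0}$. The supporting observations (one-dimensionality of both kernels under global reachability, the unconditional identity $\mathbb{1}^\top\mathrm{sym}(L_o)\mathbb{1}=0$, and the structural identity $\mathrm{sym}(L_o)=\tfrac12(EE^\top+B_oB_o^\top-B_iB_i^\top)$, equivalently $\tfrac12(EE^\top+D_{\mathrm{out}}-D_{\mathrm{in}})$) are all correct.

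The converse, however, is a plan rather than a proof, and the decisive step is exactly the one you leave open. The Fan--Hoffman bound gives $\lambda_{\min}(\mathrm{sym}(L_o))\le \sigma_{\min}(L_o)=0$ for \emph{every} digraph with a globally reachable node --- that is precisely the content of the paper's first proposition, balanced or not --- so it carries no information that could exclude the indefinite case $\lambda_{\min}<0$ with $0$ elsewhere in the spectrum; knowing that $0$ is an eigenvalue does not place it at the bottom of the spectrum. Worse, your ``PSD upgrade'' is not an auxiliary lemma: as you yourself note, once $\mathrm{sym}(L_o)\succeq 0$ is granted, balancedness (hence kernel equality) follows in one line from $\mathbb{1}^\top\mathrm{sym}(L_o)\mathbb{1}=0$, so the implication ``$0\in\mathrm{spec}(\mathrm{sym}(L_o))\Rightarrow \mathrm{sym}(L_o)\succeq 0$'' is equivalent to the necessity direction itself --- the write-up reduces the problem to itself, and the structural identity is invoked but never used to do any work. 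The paper takes a different and more direct route that avoids any global definiteness claim: it works pointwise with a zero eigenvector $v$, deducing from $\mathrm{sym}(L_o)v=\mathbb{0}$ that $v^\top L_o v=v^\top L_o^\top v=0$, concluding $v\in\ker(L_o)\cap\ker(L_o^\top)$, and then using the one-dimensionality of both kernels to force $\ker(L_o)=\ker(L_o^\top)=\mathrm{span}(\mathbb{1})$. To repair your argument, replace the spectral-minimum strategy with an eigenvector-level argument of this kind (or genuinely prove that an unbalanced, globally reachable digraph has nonsingular $\mathrm{sym}(L_o)$, i.e., that $EE^\top+D_{\mathrm{out}}-D_{\mathrm{in}}$ cannot be singular when $D_{\mathrm{out}}\neq D_{\mathrm{in}}$); as it stands, the necessity direction is a gap.
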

\begin{proof}
    We first show the sufficiency. Let $q\neq \mathbb{0}$ be a vector in $S$. Then we have $\tfrac{1}{2}L_oq+\tfrac{1}{2}L_o^\top q=\mathbb{0}=\tfrac{1}{2}(L_o+L_o^\top )q =\mathrm{sym}(L_o)q$ and $(q,0)$ is an eigenpair of $\mathrm{sym}(L_o)$.
   To prove the necessity, let $v\neq \mathbb{0}$ be the eigenvector w.r.t. the $0$ eigenvalue, i.e., $\mathrm{sym}(L_o)v=\mathbb{0}$. It follows that $v^\top \mathrm{sym}(L_o)v=\tfrac{1}{2}v^\top (L_o+L_o^\top)v=\mathbb{0}$. Since $v^\top L_ov=v^\top L_o^\top v$, we have $v^\top \mathrm{sym}(L_o)v=v^\top L_ov=v^\top L_o^\top v=\mathbb{0}$. $v\neq \mathbb{0}$, so the above equalities are satisfied only when $v\in\ker(L_o)$ and $v\in\ker(L_o^\top)$. The existence of a globally reachable node implies that the dimensions of the kernel space of $L_o$ and $L_o^\top$ are $1$, so $\ker(L_o^\top)=\ker(L_o)=S$.
\end{proof}

This proposition suggests that for a general digraph where $L_o$ and $L_o^\top$ don't have the same kernel space, the smallest eigenvalue of $\mathrm{sym}(L_o)$ is negative. Thus, the feedback path may lose passivity, even though the edge controllers are output strictly passive. The following proposition establishes the equivalence between $L_o$ and $L_o^\top$ having the same kernel space and the digraphs being balanced.
\begin{proposition}
    Let $\mathcal{G}$ contain a globally reachable node. Then, $L_o$ and $L_o^\top$ have the same kernel space if and only if $\mathcal{G}$ is balanced.
\end{proposition}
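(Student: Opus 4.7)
The plan is to reduce the equivalence to a one-line application of Lemma \ref{lemma:gmatrix}, once I pin down the kernels of $L_o$ and $L_o^\top$ using the globally reachable node hypothesis.

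First, I will observe that $L_o\mathbb{1}=\mathbb{0}$ holds automatically for any digraph. Indeed, every column of the incidence matrix $E$ has exactly one entry equal to $+1$ (the head) and one equal to $-1$ (the tail), so $E^\top\mathbb{1}=\mathbb{0}$ and hence $L_o\mathbb{1}=B_oE^\top\mathbb{1}=\mathbb{0}$. Consequently $\mathbb{1}\in\ker(L_o)$ always, with no balancedness needed.

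Next, I will invoke the globally reachable node hypothesis, which (via Theorem 6.3 in \citep{bullo2020lectures}) gives $\mathrm{rank}(L_o)=n-1$. Therefore $\dim\ker(L_o)=1$, which combined with the previous step forces $\ker(L_o)=\mathrm{span}(\mathbb{1})$. Since $\mathrm{rank}(L_o^\top)=\mathrm{rank}(L_o)=n-1$, the same dimension count gives $\dim\ker(L_o^\top)=1$. Hence the equality $\ker(L_o)=\ker(L_o^\top)$ reduces to the single condition $\mathbb{1}\in\ker(L_o^\top)$, i.e., $L_o^\top\mathbb{1}=\mathbb{0}$.

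Finally, I will apply Lemma \ref{lemma:gmatrix}, statement $iii$), which characterizes balancedness as the simultaneous conditions $L_o\mathbb{1}=\mathbb{0}$ and $L_o^\top\mathbb{1}=\mathbb{0}$. Since the first of these is automatic, balancedness is equivalent to $L_o^\top\mathbb{1}=\mathbb{0}$, which by the previous paragraph is equivalent to $\ker(L_o)=\ker(L_o^\top)$. This closes both directions of the iff. I do not anticipate a serious obstacle here; the only subtle point is recognizing that one half of Lemma \ref{lemma:gmatrix}'s balancedness condition holds unconditionally, which collapses the remaining work to a rank/dimension argument supplied by the globally reachable node assumption.
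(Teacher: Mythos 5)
Your proof is correct and takes essentially the same route as the paper's: both arguments combine the globally-reachable-node hypothesis (forcing $\mathrm{rank}(L_o)=\mathrm{rank}(L_o^\top)=|\mathbb{V}|-1$, hence one-dimensional kernels) with Lemma~\ref{lemma:gmatrix}'s characterization of balancedness via $L_o\mathbb{1}=L_o^\top\mathbb{1}=\mathbb{0}$. Your only refinement is making explicit that $L_o\mathbb{1}=B_oE^\top\mathbb{1}=\mathbb{0}$ holds unconditionally (each column of $E$ sums to zero), which lets you collapse the statement to the single condition $L_o^\top\mathbb{1}=\mathbb{0}$ and handle both directions in one chain of equivalences, where the paper argues sufficiency and necessity separately.
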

\begin{proof}    
     Recall Lemma \ref{lemma:gmatrix} that a digraph is balanced if and only if $L_o\mathbb{1}=L_o^\top\mathbb{1}=\mathbb{0}$. Now, suppose that $L_o$ and $L_o^\top$ have the same kernel space. According to \citep[Lemma 6.2]{bullo2020lectures} and \citep[Theorem 6.6]{bullo2020lectures}, the kernel space of $L_o^\top$ is given by $S={\rm span}(\mathbb{1})$, implying $\mathbb{1}\in\ker(L_o)$ and $\mathbb{1}\in\ker(L_o^\top)$.

    Conversely, suppose that $\mathcal{G}$ is balanced. The given conditions imply that both $L_o$ and $L_o^\top$ have one-dimensional kernel spaces, with $\mathbb{1}$ in both $\ker(L_o)$ and $\ker(L_o^\top)$. Consequently, we conclude that $\ker(L_o)=\ker(L_o^\top)=S$.
\end{proof}

The above results demonstrate that under linear consensus protocol, only the systems on some specified digraphs can preserve passivity. Moreover, the above passivity analysis only considers the case where the edge controllers follow the simplest dynamics. The passivity analysis may be more tricky if the edge controllers are modeled by more complex dynamics. This suggests we need a more general approach for analyzing MASs on digraphs. 

Note that the feedback path of $(\Sigma^l,\Pi^l,\mathcal{G})_E$ preserves the passivity of the controllers, mirroring the behavior observed for balanced digraphs. This serves as another example showing the intermediate position of balanced digraphs between undirected graphs and unbalanced digraphs.

\subsection{A general approach for directed coupling}
Recall that the passivity of a system is preserved after being post-multiplied by a matrix and pre-multiplied by its transpose \citep{arcak2007passivitydesign}. Also, the incidence matrix can be represented as $E=B_i+B_o$. Equivalently, the incidence matrix for a direcrted graph can be expressed as $B_o = E - B_i$.  Inspired by these, we use the decomposition idea to design a structure capable of conducting passivity analysis for MASs over digraphs, as illustrated in Figure \ref{fig:decomp}.

To derive the structure depicted in Figure \ref{fig:decomp}, let us begin by examining $(\Sigma,\Pi,\mathcal{G})_{B_o}$. By viewing Figure \ref{fig:DC_directed} from a different perspective, where $u=-B_o\mu=-(E-B_i)\mu$, we can decompose its feedback loop into two distinct branches. The first branch transmits the signals for $y$ to $z$, and the second branch transmits the signals for $y$ to $w$. The first branch and the forward path form a feedback connection, as highlighted in the gray box in Figure \ref{fig:decomp}. This sub-structure is  the diffusively-coupled network $(\Sigma,\Pi,\mathcal{G})_E$, and the feedback connection is passive, provided that the agents and edge controllers are passive. Indeed, the first branch preserves the passivity of the edge controllers, and the inner product $z^\top y$ satisfies $$z^\top y=\mu^\top E^\top y=\mu^\top\zeta\geq \dot{V},$$ 
where $V(\eta)$ denotes a continuously differentiable positive semidefinite function known as the storage function.  Consequently, by Theorem 6.1 in \citep{Khalil2008Nonlinear}, the feedback connection is passive for all input-output pairs.

With this understanding, we can treat $w$ as an external input that carries directed information to the inner-feedback loop (outlined in grey in Figure \ref{fig:decomp}). Although the passivity of the overall system $(\Sigma, \Pi,\mathcal G)_{B_o}$ cannot be guaranteed, we can still exploit the passivity properties preserved in the inner-feedback loop and perform analysis on the feedback interconnection with the input-output pair $(w,y)$.

To differentiate between the structures represented in Figure \ref{fig:DC_directed} and Figure \ref{fig:decomp}, we introduce the notation $(\Sigma,\Pi,\mathcal{G},w)$ to denote the system depicted in Figure \ref{fig:decomp}. Now, we can define new diffusively-coupled relations for $(\Sigma,\Pi,\mathcal{G},w)$. Let $w(t)=B_i\mu(t)$ and $z(t)=E\mu(t)$. It follows that,
\begin{align}
    u(t)&=w(t)-z(t)=-B_o\mu(t) \label{eq:u_Bomu}\\
    \zeta(t)&=E^\top y(t),
\end{align}
where the structure reduces to $(\Sigma,\Pi,\mathcal{G})_E$ when $w(t)=0$.
This decomposition serves as a general approach to handling directed coupling and allows us to analyze the system's behavior using passivity theory. 

Recall that the output consensus problem can be transformed to an equivalent problem of analyzing convergence to a submanifold. To apply the idea of passivity relations \eqref{eq:passivity_relations}, define two relations, i.e., the agent relation $H_a$, 
$$(u(t),y(t))\in H_a\subset (\bar{\mathscr{L}}^2,\bar{\mathscr{L}}_S^2),$$ and the controller relation $H_c$,
$$(y(t),z(t))\in H_c\subset (\bar{\mathscr{L}}_S^2,\bar{\mathscr{L}}^2).$$ We can now define the problem that we will consider.
\begin{problem}\label{pb:outagreement}
    Consider the network system $(\Sigma,\Pi,\mathcal{G},w)$. Under what passivity conditions on $H_a$ and $H_c$ does the output of the system converge to the agreement submanifold?
\end{problem}

\section{\mycolor{Stabilization} of network systems over balanced digraphs}\label{sec:main_result}
This section focuses on a particular type of digraph, denoted by $\mathcal{G}_b$, which is characterized by being balanced and having a globally reachable node. \mycolor{We consider the stabilization problem, which is a specific form of Problem \ref{pb:outagreement}, where the objective is to ensure that the outputs of all agents converge to zero, i.e., $\lim\limits_{t\to\infty}y(t)=\mathbb{0}\in\mathrm{span}(\mathbb{1})$. Addressing this problem provides a foundation for solving the more general output consensus problem for networked systems over general digraphs (i.e., Problem \ref{pb:outagreement}).}

Assume that for $i\in\mathbb{V}$, the agents follow the dynamics,
\begin{equation}\label{eq:sigma_o}
    \begin{aligned}
        \Sigma_i^o:\begin{cases}
            \dot{x}_i(t)=f_i(x_i(t),u_i(t)),\\
        y_i(t)=h_i(x_i(t)),
        \end{cases}
    \end{aligned}
\end{equation}
where $f_i$ and $h_i$ are continuously differentiable functions. \mycolor{For the system \eqref{eq:sigma_o}, we say $x_0$ is \emph{asymptotically reachable from $\{0\}$} if there exists an input $u_{x_0}:(-\infty,0]\to\mathbb{R}^{\vert\mathbb{V}\vert}$ such that, when applying $u_{x_0}$ to the state equation of \eqref{eq:sigma_o} for $t\in(-\infty,0]$, we have $x(0)=x_0$ and $x(t)\to 0$ as $t\to -\infty$. }

\mycolor{Consider the output agreement problem of a network system $(\Sigma^o,\Pi,\mathcal{G}_b,w)$. We first establish the passivity-like inequalities of $H_a$ and $H_c$ by exploiting the inherent passivity properties of the individual agents and controllers. Then, we derive a sufficient condition to guarantee output agreement of the system.}

The following result provides a passivity-like inequality for the agent relation $H_a$.
\begin{proposition}\label{prop:agents_passive}
   Consider a group of $|\mathbb{V}|$ SISO agents \eqref{eq:sigma_o} \mycolor{interconnected over a digraph $\mathcal{G}_b$}. Assume that each agent $\Sigma_i^o$, for $i\in\{1,\ldots,|\mathbb{V}|\}$, is OP-$\varepsilon_i$ \mycolor{and with initial conditions that are asymptotically reachable from $\{0\}$}. Let $\varepsilon=\min_i(\varepsilon_i)$. Then, it follows that 
    \begin{equation*}\label{eq:agentpp}
        u^\top \proj_{S^\perp}(y)\geq \sum_{i=1}^{|\mathbb{V}|} \dot{Q}_i(x_i)-\|u\|_2\|y\|_2+\varepsilon \|\proj_{S^\perp}(y)\|_2^2,
    \end{equation*}
    \mycolor{and the passivity relation satisfies,
    \begin{equation}\label{eq:int_agent_relation}
        \langle u^\tau, \proj_{S^\perp}(y^\tau) \rangle \hspace{-3pt} \geq \hspace{-3pt} -\tfrac{\max(D_o)}{\varepsilon}\|\mu^\tau\|_{\mathscr{L}_2}^2 \hspace{-3pt}+\varepsilon\|\proj_{S^\perp}(y^\tau)\|_{\mathscr{L}_2}^2,
    \end{equation}}
    where $Q_i(x_i)$ is the storage function, and \mycolor{$\max(D_o)$} denotes \mycolor{the maximal out-degree of $\mathcal{G}_b$}, respectively.
\end{proposition}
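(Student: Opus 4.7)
The plan is to combine the output-strict passivity bounds of the individual agents with an orthogonal decomposition of $y$ onto $S$ and $S^\perp$, and then upgrade the pointwise estimate to an $\mathscr{L}^2$ bound by exploiting the balanced-digraph spectral identity $B_oB_o^\top = D_o$. I would identify $Q_i$ with the storage function $V_i$ certifying OP-$\varepsilon_i$ of agent $\Sigma_i^o$. Summing the per-agent inequality $\dot V_i(x_i) \le u_i y_i - \varepsilon_i y_i^2$ over $i$ and lower-bounding each $\varepsilon_i$ by $\varepsilon = \min_i \varepsilon_i$ gives $\sum_i \dot Q_i(x_i) \le u^\top y - \varepsilon\|y\|_2^2$, which rearranges to $u^\top y \ge \sum_i \dot Q_i(x_i) + \varepsilon\|y\|_2^2$.

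For the pointwise inequality, I would decompose $y = \proj_S(y) + \proj_{S^\perp}(y)$. Orthogonality gives $\|y\|_2^2 = \|\proj_S(y)\|_2^2 + \|\proj_{S^\perp}(y)\|_2^2$, so
\begin{equation*}
u^\top \proj_{S^\perp}(y) = u^\top y - u^\top \proj_S(y) \ge \sum_i \dot Q_i(x_i) + \varepsilon\|\proj_{S^\perp}(y)\|_2^2 + \varepsilon\|\proj_S(y)\|_2^2 - u^\top \proj_S(y).
\end{equation*}
Discarding the nonnegative term $\varepsilon\|\proj_S(y)\|_2^2$ and bounding the residual via Cauchy-Schwarz together with $\|\proj_S(y)\|_2 \le \|y\|_2$ produces the first claim.

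To obtain the integrated bound, I would sharpen the Cauchy-Schwarz step using Young's inequality with a parameter chosen so that the retained $\varepsilon\|\proj_S(y)\|_2^2$ term absorbs the cross term $\|u\|_2\|\proj_S(y)\|_2$ exactly, leaving only a multiple of $\|u\|_2^2$ on the right-hand side. Then the relation $u = -B_o\mu$ from \eqref{eq:u_Bomu}, combined with $B_oB_o^\top = D_o$---which holds precisely because $\mathcal{G}_b$ is balanced, so in- and out-degrees coincide and each edge contributes a unique head---yields the spectral bound $\|u\|_2^2 = \mu^\top B_o^\top B_o \mu \le \lambda_{\max}(B_oB_o^\top)\|\mu\|_2^2 = \max(D_o)\|\mu\|_2^2$. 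Integrating from $0$ to $\tau$ converts $\sum_i \dot Q_i$ into $\sum_i [Q_i(x_i(\tau)) - Q_i(x_i(0))]$; the asymptotic reachability from $\{0\}$ lets us pre-steer the trajectory so that $Q_i(x_i(0)) = 0$, while $Q_i(x_i(\tau))\ge 0$ can be dropped, delivering the stated $\mathscr{L}^2$ passivity relation \eqref{eq:int_agent_relation} with constants of the claimed order.

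The principal obstacle I anticipate is the bookkeeping around the storage-function boundary terms inside the truncation framework of $\bar{\mathscr{L}}^2$: the pointwise inequality generates a $\dot Q$ term whose integral over $[0,\tau]$ is not obviously signed unless the initial storage vanishes, which is exactly the role of the reachability-from-$\{0\}$ hypothesis. Once that is in place, the remainder of the argument is a routine combination of the per-agent OSP inequality, Young's inequality, and the balanced-graph identity $B_oB_o^\top = D_o$.
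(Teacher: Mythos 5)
Your pointwise argument coincides with the paper's (the paper writes $\proj_S(y)=\tfrac{1}{|\mathbb{V}|}\mathbb{1}\mathbb{1}^\top y$ explicitly, but the estimate via Cauchy--Schwarz and $\|\proj_{S^\perp}(y)\|_2\le\|y\|_2$ is the same), while your route to the integrated bound \eqref{eq:int_agent_relation} is genuinely different and, in one respect, sharper. The paper keeps the cross term as $-\|u\|_2\|y\|_2$, integrates over $(-\infty,\tau]$, applies Cauchy--Schwarz in $\mathscr{L}^2$, and then invokes the finite-gain consequence of output-strict passivity, $\|y^\tau\|_{\mathscr{L}^2}\le\tfrac{1}{\varepsilon}\|u^\tau\|_{\mathscr{L}^2}$ (obtained by completing the square in the summed dissipation inequality), to reach $-\tfrac{1}{\varepsilon}\|u^\tau\|_{\mathscr{L}^2}^2$. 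Your Young's-inequality absorption, $\|u\|_2\|\proj_S(y)\|_2\le\varepsilon\|\proj_S(y)\|_2^2+\tfrac{1}{4\varepsilon}\|u\|_2^2$, bypasses the gain lemma entirely and delivers the constant $\tfrac{\max(D_o)}{4\varepsilon}$ in place of $\tfrac{\max(D_o)}{\varepsilon}$, which implies the stated relation a fortiori. Similarly, your exact identity $B_oB_o^\top=D_o$ is cleaner than the paper's Ger\v{s}gorin bound on $B_o^\top B_o$; note, however, that your justification for it is spurious: the identity holds for \emph{every} digraph, because each column of $B_o$ has exactly one unit entry (each edge has a unique head), and balancedness plays no role anywhere in the agent relation $H_a$ --- it enters only in the controller relation of Proposition~\ref{prop:controller_passive}, via $E^\top\mathbb{1}=\mathbb{0}$.

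The one genuine gap is your handling of the storage boundary term. You integrate over $[0,\tau]$ and claim that asymptotic reachability from $\{0\}$ ``lets us pre-steer the trajectory so that $Q_i(x_i(0))=0$.'' It does not: the hypothesis says the \emph{given} initial condition $x_0$ --- arbitrary, generally with $Q_i(x_i(0))>0$ --- is reached from the zero state along a past trajectory, i.e., $x(t)\to 0$ as $t\to-\infty$; it gives you no control over the storage at $t=0$, and the leftover term $-\sum_i Q_i(x_i(0))$ has the wrong sign to be discarded. The fix, which is what the paper does, is to integrate the pointwise inequality over $(-\infty,\tau]$, consistent with the inner product on $\mathscr{L}^2(\mathbb{R})$ and the truncation convention underlying \eqref{eq:int_agent_relation}; the boundary term then becomes $\lim_{t\to-\infty}\sum_i Q_i(x_i(t))=0$ by reachability, while $\sum_i Q_i(x_i(\tau))\ge 0$ is dropped. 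With that one substitution, the rest of your argument goes through verbatim and in fact proves a slightly stronger inequality than the proposition states.
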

\begin{proof}
    We start by summing up the passivity inequalities of all the agents, i.e., 
    \begin{equation}\label{eq:op_agents}
        u^\top y\geq\sum_{i=1}^{|\mathbb{V}|} \dot{Q}_i+\sum_{i=1}^{|\mathbb{V}|} \varepsilon_iy_i^2\geq\sum_{i=1}^{|\mathbb{V}|} \dot{Q}_i +\varepsilon \|y\|_2^2.
    \end{equation}
    Then, consider $u^\top\proj_{S^\perp}(y)$,
    \begin{equation}\label{eq:proj_agents}
        \begin{aligned}
            u^\top\proj_{S^\perp}(y)&=u^\top y-\tfrac{1}{|\mathbb{V}|}u^\top\mathbb{1}\mathbb{1}^\top y \\
            &\geq\sum_{i=1}^{{|\mathbb{V}|}} \dot{Q}_i +\varepsilon \|y\|_2^2-\tfrac{1}{|\mathbb{V}|}u^\top\mathbb{1}\mathbb{1}^\top y.
        \end{aligned}
    \end{equation}
For $-\tfrac{1}{|\mathbb{V}|}u^\top\mathbb{1}\mathbb{1}^\top y$, by the Cauchy-Schwarz inequality,
\begin{align*}
        |-\tfrac{1}{|\mathbb{V}|}u^\top\mathbb{1}\mathbb{1}^\top y|&=\tfrac{1}{|\mathbb{V}|}|u^\top\mathbb{1}\mathbb{1}^\top y|=\tfrac{1}{|\mathbb{V}|}|u^\top\mathbb{1}||\mathbb{1}^\top y| \\
        &\leq \tfrac{1}{|\mathbb{V}|} \|u\|_2 \|\mathbb{1}\|_2\|y\|_2 \|\mathbb{1}\|_2=\|u\|_2\|y\|_2.
\end{align*}
So, we arrive at 
\begin{equation}\label{eq:bound_uy}
    -\tfrac{1}{|\mathbb{V}|}u^\top\mathbb{1}\mathbb{1}^\top y\geq -\|u\|_2\|y\|_2.
\end{equation} 

Before considering $\|\proj_{S^\perp}(y)\|_2^2$, recall that $I-\tfrac{1}{|\mathbb{V}|}\mathbb{1}\mathbb{1}^\top$ is a projection matrix with eigenvalues $\{0,1^{(|\mathbb{V}|-1)}\}$. Using the properties of the Rayleigh quotient \citep[Theorem 4.2.2]{horn2012matrix}, we get 
\begin{equation}\label{eq:proj_y}
    \|\proj_{S^\perp}(y)\|_2^2=y^\top(I-\tfrac{1}{|\mathbb{V}|}\mathbb{1}\mathbb{1}^\top)y\leq y^\top y.
\end{equation}

Pluging \eqref{eq:proj_y} and \eqref{eq:bound_uy} into \eqref{eq:proj_agents}, we obtain \eqref{eq:agentpp}.

\mycolor{To show \eqref{eq:int_agent_relation}, we first rearrange \eqref{eq:op_agents},
    \begin{align*}
        \sum_{i=i}^{\vert \mathbb{V}\vert}\dot{Q}_i &\leq u^\top y-\varepsilon y^\top y=-\tfrac{1}{2\varepsilon}(u-\varepsilon y)^\top (u-\varepsilon y)\\
        &+\tfrac{1}{2\varepsilon} u^\top u-\tfrac{\varepsilon}{2}y^\top y \leq \tfrac{1}{2\varepsilon}u^Tu-\tfrac{\varepsilon}{2}y^\top y.
    \end{align*}
    Integrating both sides over $(-\infty,\tau]$ yields,
    \begin{align*}
        \int_{-\infty}^\tau y^\top(t) y(t) {\rm d}t&\leq\tfrac{1}{\varepsilon^2} \int_{-\infty}^\tau u^\top(t) u(t) {\rm d}t-\tfrac{2}{\varepsilon}\sum_{i=i}^{\vert \mathbb{V}\vert}Q_i(x(\tau))\\
        &+\tfrac{2}{\varepsilon}\lim_{t\to \infty}\sum_{i=i}^{\vert \mathbb{V}\vert}Q_i(x(-t)).
    \end{align*}
    Employing $\lim_{t\to \infty}\sum_{i=i}^{\vert \mathbb{V}\vert}Q_i(x(-t))=0$ (i.e., the asymptotical reachability of initial conditions) and the non-negative nature of $Q(x)$, it follows that,}
    \begin{equation}\label{eq:L2_agents}
        \|y^\tau\|_{\mycolor{\mathscr{L}_2}}\leq \tfrac{1}{\varepsilon}\|u^\tau\|_{\mycolor{\mathscr{L}_2}}.
    \end{equation}
    Now, by integrating $-\|u\|_2\|y\|_2$ over $(-\infty,\tau]$ and using Cauchy-Schwarz, we have that 
    \begin{equation}\label{eq:int_uy}
        \left\vert-\int_{-\infty}^\tau\|u(t)\|_2\|y(t)\|_2 {\rm d}t \right\vert  \leq \|u^\tau\|_{\mycolor{\mathscr{L}_2}}\|y^\tau\|_{\mycolor{\mathscr{L}_2}}.
    \end{equation}
    With inequalities \eqref{eq:L2_agents} and \eqref{eq:int_uy}, we obtain that
    \begin{equation}\label{eq:op_uandy}
        -\int_{-\infty}^\tau\|u(t)\|_2\|y(t)\|_2 {\rm d}t\geq -\|u^\tau\|_{\mycolor{\mathscr{L}_2}}\|y^\tau\|_{\mycolor{\mathscr{L}_2}}\geq -\tfrac{1}{\varepsilon}\|u^\tau\|_{\mycolor{\mathscr{L}_2}}^2.
    \end{equation}
    Next, we establish the relationship between $u$ and $\mu$. Since $u=-B_o\mu$ (see \eqref{eq:u_Bomu}), the following relation holds,
    \begin{align}\label{eq:L2_umu}
        -\|u^\tau\|_{\mycolor{\mathscr{L}_2}}^2=-\int_{-\infty}^\tau\mu^\top(t) B_o^\top B_o\mu(t){\rm d}t\geq-\max(D_o)\|\mu^\tau\|_{\mycolor{\mathscr{L}_2}}^2,
    \end{align}
    where the last inequality is by the properties of the Geršgorin Disks Theorem \citep[Theorem 2.8] {bullo2020lectures} and Rayleigh quotient \citep[Theorem 4.2.2]{horn2012matrix}. Indeed, observe that the entries of $B_o^\top B_o$ are either $0$ or $1$, with diagonal elements equal to $1$. Furthermore, the largest row sum of this matrix is given by $\max(D_o)$. This implies that the maximal eigenvalue of $B_o^\top B_o$ is less than or equal to $\max(D_o)$.

    \mycolor{Now, integrate both sides of \eqref{eq:agentpp} over $(-\infty,\tau]$ for $\tau\in[0,\infty)$, 
    \begin{equation}\label{eq:int_agentpp}
    \begin{aligned}
        \langle u^\tau,\proj_{S^\perp}(y^\tau) \rangle &\geq \sum_{i=1}^{\vert\mathbb{V}\vert} Q_i(x_i(\tau))-\lim_{t\to -\infty}\sum_{i=1}^{\vert\mathbb{V}\vert} Q_i(x_i(t)) \\
        &-\int_{-\infty}^\tau
        \|u\|_2\|y\|_2{\rm d}t+\varepsilon\|\proj_{S^\perp}(y^\tau)\|_{\mathscr{L}_2}^2.
        \end{aligned}
    \end{equation}
Recall that $\lim_{t\to -\infty}\sum_{i=1}^{\vert\mathbb{V}\vert} Q_i(x_i(t))=0$ and $Q_i(x_i(t))$ are non-negative. Plugging \eqref{eq:op_uandy} and \eqref{eq:L2_umu} into \eqref{eq:int_agentpp}, we get equation \eqref{eq:int_agent_relation}.}
\end{proof}

The following proposition is for the controller relation $H_c$.
\begin{proposition}\label{prop:controller_passive}
    Consider a group of $|\mathbb{E}|$ SISO edge controllers \eqref{eq:controllers_decomp}.  Assume that for all $k\in\{1,\ldots,|\mathbb{E}|\}$, the controllers $\Pi_k$ are OP-$\alpha_i$. Then, it follows that,
    \begin{equation}\label{eq:controllerpp}
        z^\top \proj_{S^\perp}(y)\geq \sum_{k=1}^{|\mathbb{E}|} \dot{W}_k(\eta_k)+\alpha\|\mu\|_2^2,
    \end{equation} 
    \mycolor{and the passivity relation satisfies,
\begin{equation}\label{eq:int_controller_relation}
    \begin{aligned}
        \langle z^\tau,\proj_{S^\perp}(y^\tau) \rangle\geq 
        -\hspace{-3pt}\lim_{t\to -\infty}\hspace{-2pt}\sum_{k=1}^{|\mathbb{E}|} {W}_k(\eta_k(t))
        +\alpha\|\mu^\tau\|_{\mathscr{L}_2}^2,
    \end{aligned}
    \end{equation}}
    where $W_k(\eta_k)$ are the storage functions and $\alpha=\min\limits_k(\alpha_k)$.
\end{proposition}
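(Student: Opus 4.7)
The plan is to follow the same template as the agents' proof: first establish the pointwise inequality by summing the individual OP-$\alpha_k$ conditions, then integrate over $(-\infty,\tau]$ to get the passivity-relation form. The key structural identity is that the diffusive-coupling relations $z = E\mu$ and $\zeta = E^\top y$ hold in the $(\Sigma,\Pi,\mathcal{G},w)$ decomposition (as derived in Section \ref{sec:general_structure}), and that balancedness of $\mathcal{G}_b$ forces $E^\top \mathbb{1} = \mathbb{0}$ by Lemma \ref{lemma:gmatrix}.

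First, I would summing the OP-$\alpha_k$ inequality $\mu_k \zeta_k \geq \dot W_k(\eta_k) + \alpha_k \mu_k^2$ over $k=1,\dots,|\mathbb{E}|$, and lower-bound $\sum_k \alpha_k \mu_k^2 \geq \alpha \|\mu\|_2^2$ with $\alpha = \min_k \alpha_k$, which yields
\begin{equation*}
\mu^\top \zeta \;\geq\; \sum_{k=1}^{|\mathbb{E}|} \dot W_k(\eta_k) + \alpha \|\mu\|_2^2.
\end{equation*}
Next I would rewrite the left-hand side of \eqref{eq:controllerpp} using $z = E\mu$ and the explicit form of $\proj_{S^\perp}$:
\begin{equation*}
z^\top \proj_{S^\perp}(y) \;=\; \mu^\top E^\top \!\left( I - \tfrac{1}{|\mathbb{V}|}\mathbb{1}\mathbb{1}^\top\right) y \;=\; \mu^\top E^\top y - \tfrac{1}{|\mathbb{V}|}\mu^\top (E^\top \mathbb{1})(\mathbb{1}^\top y).
\end{equation*}
By Lemma \ref{lemma:gmatrix}, $E^\top \mathbb{1} = \mathbb{0}$, so the second term vanishes and $z^\top \proj_{S^\perp}(y) = \mu^\top \zeta$. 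Combining with the OP-sum then gives \eqref{eq:controllerpp} directly.

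For the integrated passivity relation \eqref{eq:int_controller_relation}, I would integrate \eqref{eq:controllerpp} over $(-\infty,\tau]$ for arbitrary $\tau \in [0,\infty)$, obtaining
\begin{equation*}
\langle z^\tau, \proj_{S^\perp}(y^\tau)\rangle \;\geq\; \sum_{k=1}^{|\mathbb{E}|}\Bigl( W_k(\eta_k(\tau)) - \lim_{t\to -\infty} W_k(\eta_k(t))\Bigr) + \alpha \|\mu^\tau\|_{\mathscr{L}_2}^2,
\end{equation*}
and then drop $\sum_k W_k(\eta_k(\tau)) \geq 0$ (by positive semi-definiteness of the storage functions) to recover exactly \eqref{eq:int_controller_relation}.

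The main obstacle is conceptual rather than computational: recognizing that the projection $\proj_{S^\perp}$ on the output side collapses against $E^\top$ precisely because $\mathcal{G}_b$ is balanced. This is the place where the balanced-digraph hypothesis enters the controller analysis (by contrast, the agents' bound \eqref{eq:int_agent_relation} leaned on output-strict passivity and the $\max(D_o)$ estimate for $B_o^\top B_o$, not on $E^\top \mathbb{1}=\mathbb{0}$). Once this reduction is made, the remainder is routine manipulation: summation of pointwise passivity inequalities, integration, and discarding non-negative storage terms exactly as in the proof of Proposition \ref{prop:agents_passive}.
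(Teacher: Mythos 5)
Your proposal is correct and follows essentially the same route as the paper's proof: summing the OP-$\alpha_k$ inequalities, using the incidence-matrix identity to collapse $z^\top \proj_{S^\perp}(y)$ to $\mu^\top\zeta$, then integrating over $(-\infty,\tau]$ and discarding the non-negative term $\sum_k W_k(\eta_k(\tau))$. One cosmetic remark: the identity $E^\top\mathbb{1}=\mathbb{0}$ that kills the rank-one term holds for \emph{every} incidence matrix (each column has exactly one $+1$ and one $-1$), so contrary to your closing comment this step does not actually require balancedness --- only $E\mathbb{1}=\mathbb{0}$ does --- though the paper frames it the same way you do.
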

\begin{proof}
    Recall that for the incidence matrix of a balanced digraph, the relation $E^\top \mathbb{1}_n=\mathbb{0}_m$ and $E\mathbb{1}_m=\mathbb{0}_n$ hold. Thus,
    \begin{equation*}
    \begin{aligned}
        E^\top y&=(I_m-\tfrac{1}{n}\mathbb{1}_m\mathbb{1}_m^\top)(E^\top y)=\proj_{S^\perp}(E^\top y) \\
        &=E^\top(I_n-\tfrac{1}{n}\mathbb{1}_n\mathbb{1}_n^\top)y=E^\top \proj_{S^\perp}(y).
    \end{aligned}
    \end{equation*}
    With this relation, summing up the passivity inequalities of all the controllers yields,
    \begin{equation}
    \begin{aligned}
         \mu^\top\zeta&=\mu^\top E^\top y=z^\top \proj_{S^\perp}(y)\\
         &\geq \sum_{k=1}^{|\mathbb{E}|} \dot{W}_k+\sum_{k=1}^{|\mathbb{E}|} \alpha_k \mu_k^2 
         \geq \sum_{k=1}^{|\mathbb{E}|} \dot{W}_k+\alpha\|\mu\|_2^2.
    \end{aligned}
    \end{equation}
    This demonstrates \eqref{eq:controllerpp}. \mycolor{Now, integrate both sides of \eqref{eq:controllerpp} over $(-\infty,\tau]$ for $\tau\in[0,\infty)$, and we obtain \eqref{eq:int_controller_relation}.}
\end{proof}

\mycolor{Recall that achieving output agreement suggests \\ $\lim\limits_{t\to\infty}\proj_{S^\perp}(y(t))=0$. Now, we are ready to present the main result of this section.}
\begin{theorem}\label{thm:consensus}
    Consider a network system $(\Sigma^o,\Pi,\mathcal{G}_b,w)$. Suppose the conditions of Proposition \ref{prop:agents_passive} and Proposition \ref{prop:controller_passive} are met.  If $\alpha\geq\tfrac{\max(D_o)}{\varepsilon}$ where $\max(D_o)$ denotes the maximal out-degree of $\mathcal{G}$, then the system is \mycolor{stabilized}.
\end{theorem}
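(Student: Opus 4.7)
The plan is to combine the integrated passivity-like relations of Propositions~\ref{prop:agents_passive} and \ref{prop:controller_passive} with the algebraic identity coming from the loop decomposition of $(\Sigma,\Pi,\mathcal{G},w)$ in Figure~\ref{fig:decomp}, and then exploit the balanced-digraph structure to close the argument.

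First, I would write down the integrated inequalities \eqref{eq:int_agent_relation} and \eqref{eq:int_controller_relation}, discarding the storage-function limit at $t\to-\infty$ using the asymptotic reachability of the initial conditions (and the analogous assumption for the controllers' states, implicit in the framework). Adding the two inequalities and invoking the key identity $u+z=-B_o\mu+E\mu=B_i\mu=w$ yields
\begin{equation*}
\langle w^\tau,\proj_{S^\perp}(y^\tau)\rangle \;\geq\; \Bigl(\alpha-\tfrac{\max(D_o)}{\varepsilon}\Bigr)\|\mu^\tau\|_{\mathscr{L}^2}^2 + \varepsilon\,\|\proj_{S^\perp}(y^\tau)\|_{\mathscr{L}^2}^2.
\end{equation*}

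Next, I would bound the left-hand side from above. In the closed-loop setting, $w=B_i\mu$ is internally generated, so by Cauchy--Schwarz together with the Geršgorin--Rayleigh estimate used in the proof of Proposition~\ref{prop:agents_passive}, one has $\|w^\tau\|_{\mathscr{L}^2}^2 \leq \max(D_i)\|\mu^\tau\|_{\mathscr{L}^2}^2$. Here the balanced-digraph hypothesis enters decisively: $\max(D_i)=\max(D_o)$, so the upper bound involves exactly the constant appearing in the lower bound. A Young's inequality with a well-chosen weight then splits the mixed term $\|w^\tau\|_{\mathscr{L}^2}\|\proj_{S^\perp}(y^\tau)\|_{\mathscr{L}^2}$ into pieces that absorb against the right-hand side. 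Under the gain condition $\alpha\geq\max(D_o)/\varepsilon$, the coefficient of $\|\mu^\tau\|_{\mathscr{L}^2}^2$ is non-negative, and one is left with a uniform-in-$\tau$ bound $\|\proj_{S^\perp}(y^\tau)\|_{\mathscr{L}^2}^2 \leq M<\infty$. Letting $\tau\to\infty$ gives $\proj_{S^\perp}(y)\in\mathscr{L}^2$, so the output approaches the agreement submanifold $S$ in the $\mathscr{L}^2$ sense.

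To upgrade this to pointwise stabilization $y(t)\to 0$, I would proceed in two pieces. First, the same chain of inequalities also yields $\mu\in\mathscr{L}^2$, hence $u=-B_o\mu\in\mathscr{L}^2$. Continuous differentiability of $f_i$ and $h_i$ makes $\proj_{S^\perp}(y)$ uniformly continuous on $[0,\infty)$, so a Barbalat-type argument promotes the $\mathscr{L}^2$-convergence to $\proj_{S^\perp}(y)(t)\to 0$. Second, $\mu\to 0$ together with the OP-$\varepsilon$ property of each agent forces any admissible equilibrium on $S$ to satisfy $y_i=0$, pinning the agreement value at zero and yielding $y(t)\to 0$.

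The hardest step will be the constant bookkeeping in the second paragraph: Young's inequality must be tuned so that the required threshold comes out exactly as $\alpha\geq\max(D_o)/\varepsilon$ rather than a strictly larger multiple. This may require weighting the two integrated inequalities unevenly before summing, or exploiting the balanced-digraph identity $B_i^\top\mathbb{1}_{|\mathbb{V}|}=-\mathbb{1}_{|\mathbb{E}|}$ (and its counterpart for $B_o$) to cancel the bilinear cross-term $(\mathbb{1}^\top\mu)(\mathbb{1}^\top y)$ that $\proj_{S^\perp}$ introduces into $\langle w,\proj_{S^\perp}(y)\rangle$. Without this cancellation a naïve Young's estimate only delivers a looser threshold, so identifying the correct pairing is the genuine technical hurdle.
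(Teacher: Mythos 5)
Your skeleton matches the paper's at the endpoints --- summing the integrated relations \eqref{eq:int_agent_relation} and \eqref{eq:int_controller_relation} with $w=u+z$, a Barbalat step to upgrade to pointwise convergence, and pinning the agreement value at zero via output strict passivity --- but the decisive middle step is different and, as you yourself flag, does not close at the stated threshold. If you bound $\|w^\tau\|_{\mathscr{L}^2}^2\leq \max(D_i)\|\mu^\tau\|_{\mathscr{L}^2}^2=\max(D_o)\|\mu^\tau\|_{\mathscr{L}^2}^2$ and split $\langle w^\tau,\proj_{S^\perp}(y^\tau)\rangle$ by Young with the natural weight, you arrive at an inequality of the form $\left(\alpha-\tfrac{3\max(D_o)}{2\varepsilon}\right)\|\mu^\tau\|_{\mathscr{L}^2}^2+\tfrac{\varepsilon}{2}\|\proj_{S^\perp}(y^\tau)\|_{\mathscr{L}^2}^2\leq C$, so under the hypothesis $\alpha\geq\max(D_o)/\varepsilon$ the $\mu$-coefficient can be negative and nothing controls $\|\mu^\tau\|_{\mathscr{L}^2}$; indeed at the exact threshold even your pre-Young inequality has zero coefficient on $\|\mu^\tau\|_{\mathscr{L}^2}^2$, so your subsequent claims ``$\mu\in\mathscr{L}^2$'' and ``$\mu\to 0$'' are unsupported. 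The cancellation you hope for via $B_i^\top\mathbb{1}_{|\mathbb{V}|}=-\mathbb{1}_{|\mathbb{E}|}$ does not materialize either: $\langle w,\proj_{S^\perp}(y)\rangle=\langle \mu, B_i^\top y+\tfrac{1}{|\mathbb{V}|}\mathbb{1}\mathbb{1}^\top y\rangle$, and the bilinear term $(\mathbb{1}^\top\mu)(\mathbb{1}^\top y)$ has no sign and no partner to cancel against. So the route as written proves the theorem only with a strictly larger gain threshold, which is a genuine gap, not mere bookkeeping.

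The paper avoids the absorption problem entirely by never trading $w$ against $\mu$. It proceeds in a different order: (i) it first establishes boundedness of all trajectories from the passivity of both the forward and feedback paths of the decomposition (Propositions \ref{prop:agents_passive} and \ref{prop:controller_passive}) together with a radially unbounded storage function; (ii) it uses the gain condition $\alpha\geq\max(D_o)/\varepsilon$ exactly once, to cancel the two $\|\mu^\tau\|_{\mathscr{L}^2}^2$ terms when summing \eqref{eq:int_agent_relation} and \eqref{eq:int_controller_relation}, which yields $\varepsilon\|\proj_{S^\perp}(y^\tau)\|_{\mathscr{L}_2}^2\leq\langle w^\tau,\proj_{S^\perp}(y^\tau)\rangle+C$ --- this cancel-before-estimating ordering is precisely why the threshold is $\max(D_o)/\varepsilon$ rather than a larger multiple; and (iii) it then treats $w$ as the exogenous input of the outer interconnection and applies an output-strict-passivity finite-gain argument in the style of Khalil's Lemma 6.5 to obtain \eqref{eq:wy_finite}, from which finiteness of $\int_0^\infty\|\proj_{S^\perp}(y(t))\|^2\,{\rm d}t$ and then Barbalat follow. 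Two further repairs your proposal needs even on the shared steps: uniform continuity of $\proj_{S^\perp}(y)$ requires the boundedness of trajectories from step (i) (continuous differentiability of $f_i,h_i$ alone is not enough), and the controller storage limit $\lim_{t\to-\infty}\sum_k W_k(\eta_k(t))$ is carried by the paper as a finite constant rather than assumed zero --- the asymptotic-reachability hypothesis is stated only for the agents.
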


\begin{proof}
    We start by showing that the trajectories of $(\Sigma^o,\Pi,\mathcal{G},w)$ are bounded. From Proposition \ref{prop:agents_passive} and Proposition \ref{prop:controller_passive}, both the forward path (from $u$ to $y$) and the feedback path (from $y$ to $z$) of Figure \ref{fig:decomp} are passive. Consequently, it satisfies a global dissipation inequality, where the rate of change of the storage function is bounded by the supply rate. Since the storage function can be chosen to be radially unbounded (i.e., a quadratic storage function), the trajectories must be bounded.

     Recall that $w=z+u$. Sum up \eqref{eq:int_agent_relation} and \eqref{eq:int_controller_relation}, 
     \begin{small}
\begin{equation}\label{eq:sum_wprojy}
    \begin{aligned}
    \varepsilon\|\proj_{S^\perp}(y^\tau)\|_{\mathscr{L}_2}^2\leq \hspace{-2pt}\langle w^\tau, \proj_{S^\perp}(y^\tau)\rangle+\hspace{-2pt}\lim_{t\to -\infty}\hspace{-3pt}\sum_{i=1}^{|\mathbb{V}|}Q(x_i(t)),
    \end{aligned}
    \end{equation}
    \end{small}
     and the pointwise form yields,
     \begin{small}
         $$w^\top \proj_{S^\perp}(y)\geq \sum_{k=1}^{\vert \mathbb{E}\vert} \dot{W}_k+\varepsilon\|\proj_{S^\perp}(y)\|_2^2.$$
     \end{small}
    Then, following an approach analogous to the proof of Lemma 6.5 in \citep{Khalil2008Nonlinear}, it can be shown that $\proj_{S^\perp}(y)$ is bounded for bounded $w$, as described by,
    \begin{small}
\begin{equation}\label{eq:wy_finite}
\|\proj_{S^\perp}(y^\tau)\|_{\mathscr{L}^2}\leq \tfrac{1}{\varepsilon}\|w^\tau\|_{\mathscr{L}_2}+\hspace{-3pt}\sqrt{\tfrac{2}{\varepsilon}\hspace{-3pt}\lim_{t\to-\infty}\hspace{-2pt}\sum_{k=1}^{|\mathbb{E}|}W(\eta_k(t))}
\end{equation} 
\end{small}

Now, let $s(t)=\proj_{S^\perp}^\top(y(t))\proj_{S^\perp}(y(t))$. From \eqref{eq:sum_wprojy}, due to the boundedness of trajectories and \eqref{eq:wy_finite}, we obtain that \\$\lim_{\tau\to\infty}\int_{0}^\tau s(t){\rm d}t$ exists and is finite.

Next, we demonstrate that $s(t)$ is uniformly continuous. Given the dynamics of $\Sigma_i^o$, the derivative of $y_i$ can be expressed as $\dot{y}_i=\tfrac{\partial h_i}{\partial x_i} f_i(x_i,u_i)$. Since the trajectories are bounded, $\tfrac{\partial h_i}{\partial x_i}$ and $f_i(x_i,u_i)$ are also bounded, implying the boundedness of $\dot{y}_i$ for all $i\in[1,\vert\mathbb{V}\vert]$. Consequently, $\tfrac{\rm d}{{\rm d} y}s(t)=2y^\top(I-\tfrac{1}{|\mathbb{V}|}\mathbb{1}\mathbb{1}^\top)\dot{y}$ is bounded. Thus, $s(t)$ is uniformly continuous.

Now, having satisfied the conditions for applying Barbalat's Lemma~\citep[Lemma 8.2]{Khalil2008Nonlinear} to $s(t)$, we conclude that $\proj_{S^\perp}(y(t)) \to 0$ as $t \to \infty$, which implies asymptotic output agreement. \mycolor{Consequently, $E^\top y(t) = \zeta(t) = \mathbb{0}$. Since both the agents and controllers are output strictly passive, their input-output trajectories pass through the origin, yielding $\zeta(t) = \mu(t) = u(t) = y(t) = \mathbb{0}$. Thus, the network system is stabilized.}
\end{proof}

\begin{figure*}[!ht]
\centering
    \begin{subfigure}{0.3\textwidth}
\centering
    \raisebox{0.8cm}{
    \begin{tikzpicture}[scale = .85]
        \Vertex[size=.5,x=0, y=2, label=$1$,opacity =.5]{v1} 
        \Vertex[size=.5,x=0, y=0, label=$2$,opacity =.5]{v2} 
        \Vertex[size=.5,x=-1, y=-1, label=$3$,opacity =.5]{v3}
        \Vertex[size=.5,x=-1, y=1, label=$4$
        ,opacity =.5]{v4}
        \Vertex[size=.5,x=1, y=1, label=$5$,opacity =.5]{v5}
       
        \Edge[Direct](v1)(v4)
        \Edge[Direct](v1)(v5)
        \Edge[Direct](v4)(v2)
        \Edge[Direct](v5)(v2)
        \Edge[Direct](v2)(v1)
        \Edge[Direct](v2)(v3)
        \Edge[Direct](v3)(v1)

       \end{tikzpicture}
       }
    \caption[]{}
    \label{fig:case_graph}
\end{subfigure}
\begin{subfigure}{0.3\textwidth}
    \centering
    \includegraphics[width=\linewidth]{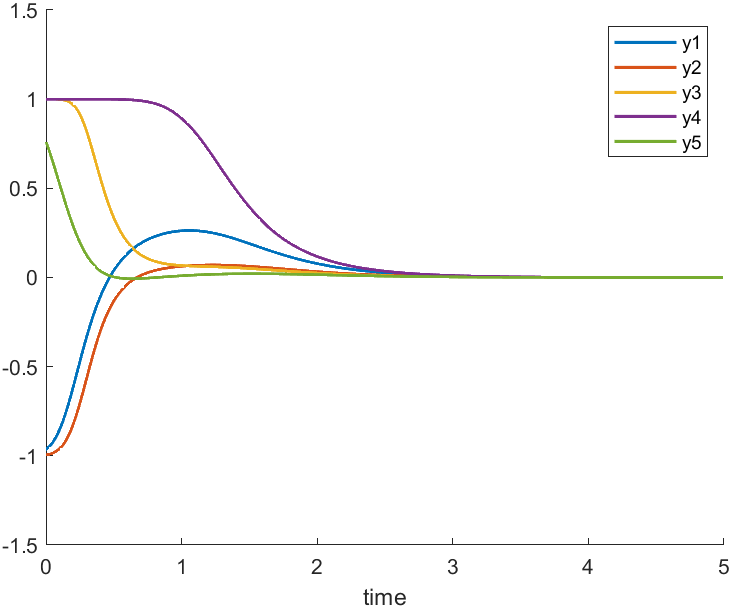}
    \caption[]{}
    \label{fig:traj_linear}
\end{subfigure}
\begin{subfigure}{0.3\textwidth}
    \centering
    \includegraphics[width=\linewidth]{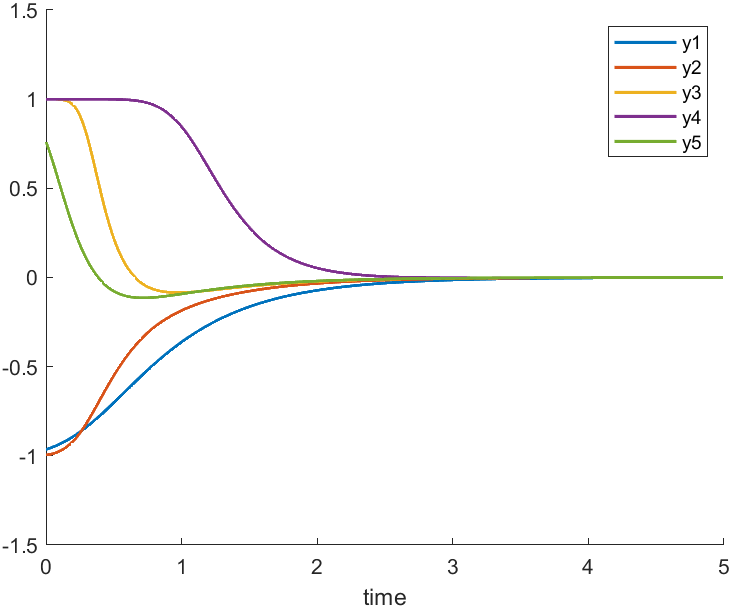}
    \caption[]{}
    \label{fig:diode}
\end{subfigure}
\caption{The underlying graph and trajectories for the outputs. (a) A balanced digraph. (b) Outputs for the system with edge controllers $\mu_k=\tfrac{4}{3}\zeta_k$. (c) Outputs for systems with nonlinear edge controllers $\mu_k=\tfrac{4}{3}\max(\zeta_k,0)$.}
\end{figure*}

Theorem \ref{thm:consensus} presents a passivity-based analysis of network systems with balanced digraphs, establishing a sufficient condition for output consensus (\mycolor{stabilization}) in terms of the passivity index of the edge controller dynamics. The theorem provides a lower bound on the passivity index, which has an insightful physical interpretation: it is the ratio between the maximal out-degree of the underlying digraph and the minimal passivity index of the agents. This result guarantees that the system is \mycolor{stabilized}, but does not necessarily ensure average consensus. Theorem \ref{thm:consensus} also has some limitations that should be acknowledged. First, the provided condition is only sufficient for stabilization, and a tighter lower bound on the passivity index may exist. Second, the theorem is applicable only to balanced digraphs that contain a globally reachable node, and it requires the agents to be output strictly passive with their outputs determined solely by their states.

\section{Case study: Neural Network}\label{sec:case}
In this section, we consider a continuous neural network on $n$ neurons \citep{sharf2018identification,scardovi2009sync_bionet},
\begin{equation}
    \dot{x}_i=-a_ix_i+b\sum_{j\sim i}(\tanh(x_j)-\tanh(x_i))+{\rm w}_i,
\end{equation} where $x_i$ and $\tfrac{1}{a_i}>0$ denote the voltage on the $i$-th neuron and the self-correlation time of the neuron, respectively, $b$ is the coupling coefficient and ${\rm w}_i$ is the exogenous input of the neuron. 
Note that the agents can be modeled by $\dot{x}_i=-a_ix_i+u_i; \ y_i = \tanh(x_i)$ and the edge controllers follow the linear consensus protocol, i.e., $\mu_k=b\zeta_k$.
The agents are OP-$a_i$ and the controllers are OP-$b$.

We run the system with $5$ neurons. The underlying graph of the system, as shown in Figure \ref{fig:case_graph}, is balanced and contains a globally reachable node. The maximum out-degree of the graph is $2$. In this example, the value ${\rm w}_i=0$ and  values $a_i$, are chosen randomly, $a=[1.66, 3.22, 4.62, 1.5, 2.56]$. The initial conditions are set to $x(0)=[-2;-3;6;10;1]$. According to Theorem \ref{thm:consensus}, if $b\geq\tfrac{\max(D_o)}{\min(a_i)}=\tfrac{4}{3}$, the system is stablized. This is verified by the simulation result, as shown in Figure \ref{fig:traj_linear}.

Furthermore, the models of the edge controllers can be chosen to be other nonlinear OP-$\tfrac{4}{3}$ systems, for example, $\mu_k=\tfrac{4}{3}\max(\zeta_k,0)$. Figure \ref{fig:diode} shows the outputs of the system, and the system is stabilized. In both cases, the system states converge to regular consensus.

\section{Concluding remarks}\label{sec:conclusion}
In this work, we first propose a general approach capable of conducting passivity analysis for network systems with directed couplings. Then, we transform the consensus problem into a convergence analysis on a submanifold. Finally, we provide a passivity-based analysis for network systems over balanced digraphs, which serves as a sufficient condition for \mycolor{stabilization (a specific instance of output consensus)}. In future work, we will explore the more general output consensus problem for networked systems over general digraphs.




 \bibliographystyle{elsarticle-harv} 
 \bibliography{ref}






\end{document}

\endinput